\documentclass{article}

\usepackage[utf8]{inputenc}
\usepackage[T1]{fontenc}
\usepackage{graphicx}
\usepackage{tikz}
\usepackage{amsmath,mathtools,amsfonts,amsthm}
\usepackage{siunitx}
\usepackage[pdftex,colorlinks=true,linkcolor=blue,citecolor=blue,urlcolor=blue]{hyperref}

\newcommand\R{\mathbb{R}}
\newcommand\N{\mathbb{N}}
\newcommand\D{\mathrm{d}}
\newcommand\I{\mathcal{I}_N}
\renewcommand\P{\mathcal{P}_N}
\renewcommand\S{\mathfrak{S}_N}
\newcommand\s{\sigma}
\newcommand\ts{\widetilde{\s}}

\newtheorem{remark}{Remark}
\newtheorem{lemma}{Lemma}

\newtheorem{theorem}{Theorem}

\title{Optimal periodic resource allocation in reactive dynamical systems: application to Microalgal production}

\author{Olivier Bernard$^{1,2}$, Liu-Di LU$^{1,3,4}$, Julien Salomon$^{3,4}$}
\date
{%
\noindent{\small\textit{$^1$INRIA Sophia Antipolis Méditerranée, BIOCORE Project-Team, Université Nice Côte d'Azur, 2004, Route des Lucioles - BP 93, 06902 Sophia-Antipolis Cedex, France}}\\
{\small\textit{$^2$Sorbonne Université, INSU-CNRS, Laboratoire d'Océanographie de Villefranche, 181 Chemin du Lazaret, 06230 Villefranche-sur-mer, France}}\\
{\small\textit{$^3$INRIA Paris, ANGE Project-Team, 75589 Paris Cedex 12, France}}\\
{\small\textit{$^4$Sorbonne Université, CNRS, Laboratoire Jacques-Louis Lions, 75005 Paris, France}}\\%
}

\begin{document}

\maketitle

\begin{abstract} 
In this paper we focus on a periodic resource allocation problem applied on a dynamical system which comes from a biological system. 
More precisely, we consider a system with $N$ resources and $N$ activities, each activity use the allocated resource to evolve up to a given time $T>0$ where a control (represented by a given permutation) will be applied on the system to re-allocate the resources. 
The goal is to find the optimal control strategies which optimize the cost or the benefit of the system. 
This problem can be illustrated by an industrial biological application, namely the optimization of a mixing strategy to enhance the growth rate in a microalgal raceway system.  
A mixing device, such as a paddle wheel, is considered to control the rearrangement of the depth of the algae cultures hence the light perceived at each lap. 
We prove that if the dynamics of the system is periodic, then the period corresponds to one re-allocation whatever the order of the involved permutation matrix is. 
A nonlinear optimization problem for one re-allocation process is then introduced. 
Since $N!$ permutations need to be tested in the general case, it can be numerically solved only for a limited number of $N$. 
To overcome this difficulty, we introduce a second optimization problem which provides a suboptimal solution of the initial problem, but whose solution can be determined explicitly. 
A sufficient condition to characterize cases where the two problems have the same solution is given. 
Some numerical experiments are performed to assess the benefit of optimal strategies in various settings.
\end{abstract}

\begin{paragraph}{Keywords: }
Resource Allocation, Nonlinear Problems, Periodic Control, Dynamical System, Microalgae production, Periodic System, Impulse Control, Switched Systems, Permutation, Linear Approximation, Assignment Problem.
\end{paragraph}

\section{Introduction}

Considering a fixed amount of the resources and a set of activities, we look for a distribution strategy which optimizes a given objective function.
This is the so-called resource allocation problem~\cite{Ibaraki1988}.
Due to its simple structure, this problem is encountered in a number of applications including load scheduling~\cite{Zhang201102}, manufacturing~\cite{Zhang201108}, portfolio selection~\cite{Hennessy2002} and computational biological problem~\cite{Akhmetzhanov2011}.
Periodic versions have also been considered. 
The periodic scheduling problem was first addressed in~\cite{Liu1973} the framework of operation research. 
Later on, the concept of \emph{proportionate fairness} constraint has been introduced~\cite{Baruah1996} to design allocation algorithms which schedule the resources in proportion to task weight.
Periodic resource allocation problems are also used in ecology, e.g., in~\cite{Cominetti2009,Piazza2010} where the authors investigate long-term behaviour of harvesting policies for a forest composed of multiple species with different maturity ages. 
In such problems, the state can also be described in terms of dynamical systems. 
As an example, hospital resources (hospital beds) continuous allocation is studied in~\cite{Abdelrazec2016} as a strategy to control the dengue fever, associated with a patient recovery rate. 
In the same way, a population of a single species with logistic growth in a patchy environment is considered in~\cite{Nagahara2021}.  
The problem here consists of the maximization of the total population by re-distributing the limited resources among the patches.

In general, resource allocation problems are related to the assignment of a resource to a sequence of two or more tasks. 
However, we focus in this paper on problems where $N$ resources are assigned to $N$ tasks. %
Additionally, we consider permanent regimes which are often relevant in the case of long term processes, as, e.g., crop harvesting, scheduling of appliances, etc. 
Moreover, here we also account for the dynamical evolution of the system between two re-allocations, further increasing the difficulty of the analysis.
In this way, our work is related to the fields of switched systems~\cite{Liberzon1999}, impulse control~\cite{Bensoussan1982,Hespanha2002} and to periodic control~\cite{Colonius1988}. 
These techniques are usually used to tackle stabilization issues. 
In this paper, we consider them in view of optimization issues.

In order to model the allocation process in the periodic system, we study the following allocation problem :  
\textit{Consider a system with $N$ resources and $N$ activities, each activity uses the allocated resource to evolve during a given time $T>0$.
At time $T$, an extra control is applied to re-allocate the resources according to a given permutation.}
It is proven that if the dynamics of the system is periodic, then it is one period corresponding to one allocation process whatever the order of the considered control strategy is.
A nonlinear problem is then introduced in order to find the optimal control strategies.
Since $N!$ permutations need to be tested in the general case, it can be numerically solved only for a limited number of $N$.
To overcome this difficulty, we propose a second optimization problem - a typical assignment problem - associated with a suboptimal solution of the initial problem for which its optimal control can be determined explicitly. 
In addition, a sufficient condition is provided to characterize cases when the two problems have the same solution.

For the sake of concreteness, we illustrate our theory by an industrial biological application, namely the mixing of microalgae cells in a cultivation set-up.
This emerging application has a promising potential, ranging from food to renewable energy~\cite{Wijffels2010,Hu2008,Scott2010} and is also involved in many high added value commercial applications such as pharmaceutical processes, cosmetics or pigments~\cite{Eppink2017,Spolaore2006}.
Outdoor algal cultivation is mainly carried out in open raceway ponds exposed to solar radiation. 
This hydrodynamic system is set in motion by a paddle wheel which homogenizes the medium for ensuring an equidistribution of the nutrients and guarantees that each cell will have regularly access to the light~\cite{Chiaramonti2013}. 
Microalgae then grow between two re-distributions depending on the light intensity received in their layer.
Different strategies have been proposed to optimize the production of the biomass in this algal raceway system~\cite{DelaHozSiegler2012,Cuaresma2011,Cornet2009,Cornet2010,Amos2004,Bernard2021b}. 
First studies about the mixing policy have shown that a well-chosen mixing strategy may improve the algal growth~\cite{Bernard2020,Bernard2021c}.
These works focus on algal production in a non-flat raceway system and assume constant velocity of the fluid and periodicity of the photosynthetic activity. 
The influence of the mixing strategy on the algal productivity is investigated only numerically by identifying the paddle wheel as a mixing device and modeling it by permutation~\cite{Bernard2021a}.
Finally an approximation of the functional to optimize which gives rise to an explicit solution, whereas the original problem can only be solved at high computational cost.

In the current study, we extend these preliminary works to a general class of resource allocation problems, identify the periodic solution of the underlying dynamical system as an asymptotic steady state and develop a complete theory of the proposed approximation. 
In particular, our analysis enables us to establish a criterion to compare the solutions of  the original and approximate resource allocation problems. 
New numerical results complete this study.

The paper is organized as follows.
We introduce our periodic resource allocation problem and the related dynamical system in Section~\ref{sec:model}.
More precisely, the optimization problem together with a simplified version based on an approximate functional are introduced in Subsection~\ref{sec:optgen}.
Some technical lemmas are given in Subsection~\ref{Sec:techres} and a criterion to guarantee that the original problem and its approximation share the same solution is given in Subsection~\ref{sec:criterion}. 
Some implementation remarks conclude this section in Subsection~\ref{Subsec:implementation_remarks}.
Section~\ref{sec:numexp} is devoted to the application to algal production. 
We present the models associated with the biological and the mixing device in a raceway pond in Subsection~\ref{sec:example}. 
The considered parameters are given in Subsection~\ref{subsec:param}. 
We illustrate the performance of our control strategies by numerical experiments in Subsection~\ref{subsec:test}. 
Finally, we conclude with some perspectives of our work in Section~\ref{sec:conclusion}.

\textbf{Notation. }
In what follows, $\N$ denotes the set of non-negative integers. 
The cardinal of a set $E$ is denoted by \#$E$.
Given a matrix $M$, we denote by $\text{ker}(M)$ its kernel, by $M^\top$ the corresponding transposed matrix and by $M_{i, j}$ its coefficient $(i , j)$.
In the same way, $W_n$ denotes the $n$-th coefficient of a vector $W$, whereas $\|W\|_\infty$ denotes its infinite norm, i.e. $\|W\|_\infty:=\max_n |W_n|$. 
The scalar product in $\R^N$, is denoted by $\langle \cdot,\cdot \rangle$, and we denote by $\I$ the identity matrix of size $N$. 

The set of permutations of $N$ elements, i.e, the set of bijections of $\{1,\cdots,N\}$, is denoted by $\S$.
The set of permutation matrices of size $N\times N$ is denoted by $\P$. 
Recall that a permutation matrix is a matrix which has exactly one entry equal to $1$ in each row and each column with the other entries being zero. 
A permutation matrix $P$ is associated to a permutation $\sigma$ by the formula $P_{i,j}=1$ if $i=\sigma(j)$ and $P_{i,j}=0$ otherwise. 
As consequence, if $W\in\R^N$, $(PW)_n=W_{\sigma^{-1}(n)}$ for all $n\in \{1,\cdots,N\}$.

\section{Description of the control problem and optimization}\label{sec:model}

Given a period $T$, an initial time $T_0$ and a sequence $(T_k)_{k\in\N}$, with $T_k:=k T+T_0$, we consider the following resource allocation process:
let $(r_n)_{n=1}^N\in\R^N$ representing a set of $N$ resources which are assumed to be constant over each time  interval $[T_k,T_{k+1})$ and renewed at each time $T_k$.
These resources can be allocated to $N$ activities denoted by $(x_n)_{n=1}^N$ where $x_n=x_n(t)$ consists of a real-valued function of time. 
Given a sequence of permutations $(\pi_k)_{k\in\N}$, with $\pi_k\in\S$, suppose that on the time interval $[T_k,T_{k+1})$, the resource $r_{\pi_k(n)}$ is assigned to the activity $x_{n}$, the latter evolving according to a linear dynamics
\begin{equation}\label{eq:evol}
\dot x_{n}(t)= -a(r_{\pi_k(n)}) x_{n}(t)+ b(r_{\pi_k(n)}),
\end{equation}
where $a : \R \rightarrow \R_+$ and $b : \R \rightarrow \R_+$ are given.

In this paper, we focus on an allocation strategy of the form $\pi_k=\sigma^k$, where $\sigma\in\S$ is fixed and $\sigma^k$ denotes the $k-$times repeated composition of $\sigma$ with itself. 
Such an assumption expresses that the same allocation device is used at each period of time. 
In this setting, the resource assignment process is such that at the end of each time period $[T_k,T_{k+1})$, the resource allocated to the activity $n$ is re-allocated to the activity $\sigma(n)$, or equivalently, that at the end of each time period $[T_k,T_{k+1})$, the resource $n$ is re-allocated to the activity $\sigma^{-1}(n)$.

Because the resource $(r_n)_{n=1}^N$ are constant with respect to time, the solution of~\eqref{eq:evol} can be computed explicitly. 
More precisely, denote by $x(t)\in\R^N$ the time dependent vector whose components are given by $x_n(t)$, i.e., $x(t):=(x_1(t),\cdots,x_N(t))^\top$. 
The process we consider reads
\begin{align}
x(t) =& \Delta(t)x(T_k) + \tau(t), \quad t\in[T_k,T_{k+1})\label{eq:sys1}\\
x(T_{k}) =& P x(T_{k}^-),\label{eq:sys2}
\end{align}
where $\Delta(t)$ is a time dependent diagonal matrix with $\Delta_{n n}(t) := e^{-a(r_{\sigma^k(n)})(t-T_k)}$, $\tau(t)$ is a time dependent vector with 
\begin{equation}\label{eq:V}
\tau_n(t):=\frac{b(r_{\sigma^k(n)})}{a(r_{\sigma^k(n)})}(1 - e^{-a(r_{\sigma^k(n)})(t-T_k)}),
\end{equation}
and $P\in\P$ the permutation matrix associated with $\sigma$. 
In this way, $k\in\N$ represents the number of re-assignments and $T_k^-$ represents the moment just before re-assignment.

\begin{remark}
All the results presented in this paper also hold for non-constant but $T-$periodic resources $(r_n(t))_{n=1}^N\in\R^N$.  
In the case of non-constant resources, the matrix $\Delta(t)$ and the vector $b(t)$ cannot be expressed explicitly. 
Such a technical issue can easily be handled using numerical integration and have no consequences for the ideas involved in analysis developed in this work. 
Hence, we consider constant resources for the clarity of the presentation.
\end{remark}

\subsection{Periodic control regime assumption}\label{subsec:periodic_regime}

Define $D:=\Delta(T)$ and $v:=\tau(T)$ and consider as a control the permutation matrix $P \in \P$ involved in~\eqref{eq:sys2}.
According to~(\ref{eq:sys1}--\ref{eq:sys2}), we have 
\begin{equation}\label{eq:xk}
x(T_{k+1}) =  P x(T_{k+1}^-) = P D x(T_k) + P v.
\end{equation}

In the next sections of this paper, we focus on a $T$-periodic solution of~(\ref{eq:sys1}--\ref{eq:sys2}). 
We will motivate this choice by two theorems. 
These require the following preliminary result.

\begin{lemma}\label{lem:inversible}
Given $k\in \N$ and $P\in\P$, the matrix $\I - (PD)^k$ is invertible.
\end{lemma}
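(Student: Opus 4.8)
The plan is to show that no eigenvalue of $PD$ equals $1$, so that $1$ is not an eigenvalue of $(PD)^k$ either, whence $\I-(PD)^k$ is invertible. The key structural facts are that $P$ is a permutation matrix (hence orthogonal, so all its eigenvalues have modulus $1$) and that $D=\Delta(T)$ is diagonal with entries $D_{nn}=e^{-a(r_{\sigma^{k_0}(n)})T}$. Since $a$ takes values in $\R_+$ and $T>0$, each diagonal entry of $D$ lies in $(0,1]$. The mild subtlety is that if some $a(r_n)=0$ the corresponding entry is exactly $1$, so $D$ need not be a strict contraction; I expect this borderline case to be the main point requiring care.

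First I would treat the generic reasoning via operator norms or the spectral radius. For any consistent matrix norm, $\rho(PD)\le \|PD\|$; using $\|\cdot\|_\infty$ (max absolute row sum) or the spectral norm, $\|P\|=1$ and $\|D\|\le 1$, so $\rho(PD)\le 1$. If we are in the case where all $a(r_n)>0$, then $\|D\|<1$, hence $\rho(PD)<1$, so $\rho((PD)^k)<1$, and $\I-(PD)^k$ is invertible because $1$ cannot be an eigenvalue. This already disposes of the strictly contractive case, and in fact gives a clean Neumann-series inverse $\sum_{j\ge0}(PD)^{kj}$.

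The core of the proof is the boundary case where some diagonal entries of $D$ equal $1$. Here I would argue directly that $1$ is not an eigenvalue of $PD$. Suppose $PDw=w$ for some $w\neq 0$. Taking infinity norms and using that $P$ merely permutes coordinates, $\|w\|_\infty=\|PDw\|_\infty=\|Dw\|_\infty=\max_n D_{nn}|w_n|\le\|w\|_\infty$, with equality forcing $D_{nn}=1$ for every index $n$ at which $|w_n|=\|w\|_\infty$. Let $S=\{n: |w_n|=\|w\|_\infty\}$; on $S$ the action of $PD$ coincides with that of $P$ alone (since $D$ acts as identity there), and a short argument shows $S$ is invariant under the permutation associated with $P$ restricted appropriately, so $w$ restricted to $S$ is a fixed vector of a genuine permutation matrix — which forces those components to be equal, say all equal to a common value $c$ with $|c|=\|w\|_\infty$. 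One then propagates this: the equation $PDw=w$ read on the complement of $S$ expresses each remaining $w_n$ as $D_{nn}$ times a coordinate of $w$; iterating along the cycle structure of $\sigma$ and using $D_{nn}\le 1$ with at least one strict inequality on each cycle that meets the complement yields a contradiction with $|w_n|=\|w\|_\infty$, unless $S$ is everything, in which case $D=\I$ and $PD=P$; but then $(PD)^k=P^k$, and since $P$ is associated to $\sigma$ whose order divides $k$ only in special cases, we still need $\I-P^k$ invertible — which fails in general. I expect this last point to be the real obstacle, and I suspect the intended hypotheses (implicit from the modeling) exclude $a\equiv 0$, or the statement should be read as: the diagonal of $D$ is not identically $1$, equivalently $PD$ is not a permutation matrix. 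Under that reading, the cycle-propagation argument above closes, giving $\rho(PD)<1$ on the relevant invariant subspace structure and hence invertibility of $\I-(PD)^k$ for all $k\in\N$.

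Assembling: state that $P$ is orthogonal so $\|PD\|_2=\|D\|_2=\max_n D_{nn}$; observe $D_{nn}=e^{-a(r_{\cdot})T}\in(0,1]$; invoke the case split, with the contractive case handled by the spectral-radius bound and the borderline case handled by the infinity-norm/cycle argument sketched above; conclude that $1\notin\mathrm{spec}\big((PD)^k\big)$, hence $\det(\I-(PD)^k)\ne 0$, i.e. $\I-(PD)^k$ is invertible. I would write the final proof around the clean hypothesis that $a$ is not identically zero on the relevant resource values, flagging that the pure-permutation degenerate case is excluded by the problem's standing assumptions.
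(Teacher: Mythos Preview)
Your spectral-radius argument is correct and suffices: under the paper's operative hypothesis that $0<d_n<1$ for every $n$ (stated explicitly in their proof), one has $\|PD\|_2=\|D\|_2=\max_n d_n<1$, hence $\rho\big((PD)^k\big)<1$ and $\I-(PD)^k$ is invertible. The paper takes a different, more bare-hands route: assuming $X=PDX$ for some nonzero $X$, it iterates the relation along the permutation to obtain $X_n=\big((PD)^K X\big)_n=d_{\sigma^K(n)}\cdots d_{\sigma(n)}X_n$, where $K$ is the order of $\sigma$, and concludes $X_n=0$ from the strict inequality $\prod_j d_{\sigma^j(n)}<1$; the general power $(PD)^k$ is handled the same way. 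Your approach is shorter once the norm machinery is invoked, while the paper's is entirely elementary and makes the cycle structure explicit; both rely on exactly the same strict bound $d_n<1$.

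Your extended discussion of the boundary case $d_n=1$ is unnecessary here and also not quite right as sketched: the set $S$ of maximal-modulus coordinates need not be all of $\{1,\dots,N\}$ in the degenerate case---it can be any $\sigma$-invariant subset on which $d_n\equiv 1$, and then the indicator of $S$ already furnishes a nonzero fixed vector of $PD$, so the lemma genuinely fails without the strict hypothesis. You are right to flag that the statement implicitly requires $a(r_n)>0$ for all $n$ (and the paper's own proof simply asserts $0<d_n<1$), but the correct exclusion is ``$d_n<1$ for every $n$'' (equivalently, no cycle of $\sigma$ lies entirely in $\{n:d_n=1\}$), not merely ``$a$ not identically zero.''
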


\begin{proof}
Assume $\I - PD$ is not invertible, then there exists a non-null vector $X\in \text{ker}(\I - PD)$, which means $X = P D X$.
Let us denote $d_n = D_{n n}$, $n=1,\ldots, N$.
Denoting by $\sigma$ the permutation associated with $P$, we find that $(D X)_n = d_n X_n$ and $X_n = (P D X)_n = d_{\sigma(n)}X_{\sigma(n)}$.
In the same way, we have $X_n = \left((PD)^k X\right)_n = d_{\sigma^k(n)} \ldots d_{\sigma(n)}X_{\sigma^k(n)}$. 
Denoting by $K$ the order of $\sigma$, we have
\begin{equation*}
X_n = \big((PD)^K X\big)_n = d_{\sigma^K(n)} \ldots d_{\sigma(n)}X_{\sigma^K(n)} = d_{\sigma^K(n)} \ldots d_{\sigma(n)}X_n.
\end{equation*}
Since, $0<d_n<1$ for $n=1,\ldots, N$, then $0<d_{\sigma^K(n)} \ldots d_{\sigma(n)}<1$.
This implies that $X_n = 0$, which contradicts our assumption.
Therefore, $\I - PD$ is invertible. 
That $\I - (PD)^{k}$ is invertible for all $k>0$ can be proved in much the same way.
\end{proof}

We can now state a convergence result about $\left(x(T_k)\right)_{k\in\N}$.

\begin{theorem}\label{th:asympt}
There exists a unique $T-$periodic solution $x_{per}(t)$  of~(\ref{eq:sys1}--\ref{eq:sys2}), satisfying
\begin{equation}\label{eq:per_cond}
x_{per}(T_k) = (\I - PD)^{-1}Pv.
\end{equation}
Moreover, for any arbitrary initial condition $x(T_0)$, we have $\lim_{k\rightarrow+\infty} x(T_k) = x_{per}(T_k).$
\end{theorem}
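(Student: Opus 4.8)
The plan is to analyze the iteration~\eqref{eq:xk}, namely $x(T_{k+1}) = PD\, x(T_k) + Pv$, which is an affine recursion on $\R^N$. First I would look for a fixed point of the period map: a vector $x^\star$ such that $x^\star = PD\, x^\star + Pv$, equivalently $(\I - PD) x^\star = Pv$. By Lemma~\ref{lem:inversible} with $k=1$, the matrix $\I - PD$ is invertible, so this fixed point exists and is unique, given by $x^\star = (\I - PD)^{-1} Pv$. This is exactly the claimed value in~\eqref{eq:per_cond}. Running the dynamics~(\ref{eq:sys1}--\ref{eq:sys2}) forward from the initial condition $x(T_0) = x^\star$ produces, by construction and by uniqueness of solutions of the (explicitly integrated) linear ODE on each interval, a solution satisfying $x(T_k) = x^\star$ for every $k$; checking that this solution is genuinely $T$-periodic as a function of $t$ (not just at the sampling times $T_k$) amounts to observing that on $[T_k, T_{k+1})$ the formula $x(t) = \Delta(t-T_k) x^\star + \tau(t-T_k)$ depends on $t$ only through $t - T_k$ and has the same starting value $x^\star$ at each $T_k$, so $x_{per}(t+T) = x_{per}(t)$.

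Next I would prove uniqueness of the $T$-periodic solution. Any $T$-periodic solution of~(\ref{eq:sys1}--\ref{eq:sys2}) must in particular satisfy $x(T_0) = x(T_1) = PD\,x(T_0) + Pv$, hence $x(T_0)$ is a fixed point of the affine period map and therefore equals $x^\star$ by the invertibility argument above; since a solution is determined on all of $\R$ by its value at $T_0$, the $T$-periodic solution is unique and coincides with $x_{per}$.

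For the convergence statement, I would subtract the fixed-point relation from the recursion to get $x(T_{k+1}) - x^\star = PD\,(x(T_k) - x^\star)$, hence by induction $x(T_k) - x^\star = (PD)^k (x(T_0) - x^\star)$. It then suffices to show $(PD)^k \to 0$ as $k \to \infty$, i.e. that the spectral radius of $PD$ is strictly less than $1$. This is the main technical point. One clean way: iterating $K$ times, where $K$ is the order of $\sigma$, gives $(PD)^K$ a diagonal matrix whose $n$-th entry is $d_{\sigma^K(n)} \cdots d_{\sigma(n)}$, a product of $K$ numbers each in $(0,1)$; hence $(PD)^K = \mathrm{diag}(\mu_n)$ with $0 < \mu_n < 1$, so $\|(PD)^{K}\|_\infty = \max_n \mu_n =: \mu < 1$. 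Writing $k = qK + r$ with $0 \le r < K$, we get $\|(PD)^k\|_\infty \le \mu^q \, \max_{0\le r < K}\|(PD)^r\|_\infty \to 0$, and therefore $x(T_k) \to x^\star = x_{per}(T_k)$ for any initial condition.

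The only real obstacle is the decay of $(PD)^k$; everything else is bookkeeping about affine maps and the interval-wise definition of the solution. The decay can be obtained either by the explicit $(PD)^K = \mathrm{diag}(\mu_n)$ computation sketched above (reusing precisely the calculation already carried out in the proof of Lemma~\ref{lem:inversible}), or, more abstractly, by noting that any eigenvalue $\lambda$ of $PD$ with eigenvector $X$ satisfies $\lambda^K X = (PD)^K X$ with $(PD)^K$ diagonal and entries in $(0,1)$, forcing $|\lambda|^K < 1$; I would present the explicit version since it is self-contained and quantitative.
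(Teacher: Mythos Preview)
Your proposal is correct and follows the same overall scheme as the paper: identify the fixed point of the affine recursion via Lemma~\ref{lem:inversible}, then show the error $e^k := x(T_k)-x^\star$ satisfies $e^{k+1}=(PD)e^k$ and tends to zero. Your treatment of $T$-periodicity in $t$ and of uniqueness is actually more explicit than the paper's.

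The only difference worth noting is in the contraction step. You pass to $(PD)^K$, observe it is diagonal with entries in $(0,1)$, and then handle general $k$ by Euclidean division $k=qK+r$. This is correct and reuses the computation from Lemma~\ref{lem:inversible}, but it is more elaborate than needed. The paper exploits a one-line observation you overlooked: since $P$ merely permutes coordinates, it is an isometry for $\|\cdot\|_\infty$, so
\[
\|e^{k+1}\|_\infty=\|PDe^k\|_\infty=\|De^k\|_\infty\le d_{\max}\|e^k\|_\infty,
\]
with $d_{\max}=\max_n d_n<1$. This gives a direct geometric contraction at every step with rate $d_{\max}$, avoiding any reference to the order $K$ of $\sigma$ and yielding a sharper (and $K$-independent) decay estimate. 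Your route has the minor advantage of making the spectral radius argument transparent; the paper's route is shorter and gives the explicit rate immediately.
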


\begin{proof}
The existence of a constant sequence $(x_{per}(T_k))_{k\in\N}$ satisfying~\eqref{eq:xk} follows from Lemma~\ref{lem:inversible}, applied with $k=1$. Solving~\eqref{eq:xk} in this setting gives~\eqref{eq:per_cond}. Let us then define the sequence $(e^k)_{k\in\N}$ by 
$e^k := x(T_k) - (\I - PD)^{-1}P v$. 
Since 
\begin{equation}\label{eq:rec}
e^{k+1} = (PD) e^k, 
\end{equation}
we find that
\[
    \|e^{k+1}\|_\infty=\|PDe^{k}\|_\infty   
                         =\|De^{k}\|_\infty  
                         \leq d_{\max}\|e^{k}\|_\infty, 
\]
where $d_{\max}:=\max_{n=1,\ldots,N}(d_{n})<1$. 
The result follows.
\end{proof}

This theorem shows that after a transient response, the system $x(t)$ can be correctly approximated by $x_{per}(t)$.
This steady state can be obtained in another way.

\begin{theorem}
We keep the notation of the previous lemma.
Given $k_0>0$, assume that the state $x$ is $k_0T$-periodic in the sense that after $k_0$ times of re-assignment, the state of each activity returns to its initial state $x_n(T_{k_0}) = x_n(T_0)$.
Then $x=x_{per}$.
\end{theorem}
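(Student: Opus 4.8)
The plan is to show that a $k_0T$-periodic state must coincide with the periodic solution $x_{per}$ identified in Theorem~\ref{th:asympt}, by exploiting the uniqueness of the latter together with the iteration~\eqref{eq:xk}. First I would observe that the $k_0T$-periodicity assumption $x_n(T_{k_0})=x_n(T_0)$ for all $n$ means exactly $x(T_{k_0})=x(T_0)$. Iterating~\eqref{eq:xk} from $T_0$ to $T_{k_0}$ gives
\[
x(T_{k_0}) = (PD)^{k_0} x(T_0) + \sum_{j=0}^{k_0-1}(PD)^{j}Pv,
\]
so the periodicity condition becomes $\bigl(\I-(PD)^{k_0}\bigr)x(T_0) = \sum_{j=0}^{k_0-1}(PD)^{j}Pv$. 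By Lemma~\ref{lem:inversible}, $\I-(PD)^{k_0}$ is invertible, so this determines $x(T_0)$ uniquely.

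Next I would identify that unique value with $x_{per}(T_0)=(\I-PD)^{-1}Pv$. The cleanest way is to check that $x_{per}(T_0)$ satisfies the same equation: since $x_{per}$ is $T$-periodic it is in particular $k_0T$-periodic, i.e. applying the $k_0$-fold iteration to the initial value $(\I-PD)^{-1}Pv$ returns the same vector. Alternatively, one can verify the algebraic identity $\bigl(\I-(PD)^{k_0}\bigr)(\I-PD)^{-1} = \sum_{j=0}^{k_0-1}(PD)^{j}$ directly (finite geometric series, valid since $\I-PD$ is invertible), which shows $(\I-PD)^{-1}Pv$ solves $\bigl(\I-(PD)^{k_0}\bigr)x = \sum_{j=0}^{k_0-1}(PD)^{j}Pv$. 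By the uniqueness just established, $x(T_0)=x_{per}(T_0)$.

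Finally, since $x(T_0)=x_{per}(T_0)$ and both $x$ and $x_{per}$ evolve by the same deterministic rule~(\ref{eq:sys1}--\ref{eq:sys2}) on every interval $[T_k,T_{k+1})$ and across every re-assignment — equivalently, both $(x(T_k))_k$ and $(x_{per}(T_k))_k$ satisfy the recursion~\eqref{eq:xk} with the same initial term — an immediate induction gives $x(T_k)=x_{per}(T_k)$ for all $k$, and then~\eqref{eq:sys1} propagates the equality to all $t$, so $x=x_{per}$.

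I do not expect a serious obstacle here; the only point requiring a little care is the geometric-series identity with the non-commutative but invertible factor $\I-PD$, which is routine once one notes $(\I-PD)\sum_{j=0}^{k_0-1}(PD)^j = \I-(PD)^{k_0}$ telescopes. The role of Lemma~\ref{lem:inversible} for general $k=k_0$ (not just $k=1$) is essential and should be invoked explicitly, since without invertibility of $\I-(PD)^{k_0}$ the periodicity condition would not pin down $x(T_0)$.
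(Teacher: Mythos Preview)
Your proof is correct and follows essentially the same approach as the paper's: both hinge on the fact that $k_0T$-periodicity forces $(\I-(PD)^{k_0})$ applied to the relevant vector to vanish, and then invoke Lemma~\ref{lem:inversible} for $k=k_0$ to conclude. The only cosmetic difference is that the paper introduces the error sequence $e^k:=x(T_k)-(\I-PD)^{-1}Pv$, which obeys the homogeneous recursion $e^{k+1}=(PD)e^k$ and thus gives $(\I-(PD)^{k_0})e^0=0$ directly, sparing the explicit geometric-series identity you wrote out; but the logical content is identical.
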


\begin{proof}
Since $x$ is assumed to be $k_0T$-periodic, we have $e^0 = e^{k_0} = (PD)^{k_0} e^0 $. 
According to Lemma~\ref{lem:inversible}, $\I - (PD)^{k_0}$ is invertible, meaning that $e^0=0$. 
Combining this with~\eqref{eq:rec}, we get that $e^k = 0$, for $k\in\N$.
The result follows.
\end{proof}

A natural choice for $k_0$ would be the order $K$ of the permutation associated with $P$, which is the smallest integer greater than one such that $P^K=\I$.
Indeed, in this case $K$ is the minimal number of re-assignments required to recover the initial order of the components of $x$.
The previous result shows that every $KT-$periodic evolution will actually be $T-$periodic.
In the next section, we show that this property is decisive to formulate an optimization problem.
In addition, the computations to solve the optimization problem will be reduced, since the CPU time required to assess the quality of a permutation will not depend on its order.

\subsection{Objective function}\label{sec:optgen}

We still consider an arbitrary control  $P\in\P$ and the vector of activities $x(t)$ defined by~(\ref{eq:sys1}--\ref{eq:sys2}). 
Assume that the mean benefit of the process on the time period $[T_k,T_{k+1})$, i.e., after $k$ times of re-assignment, reads
\begin{equation}\label{eq:fk}
f^k := \langle w, \frac1T\int_{T_k}^{T_{k+1}} x(t) \D t\rangle,
\end{equation} 
where $w\in\R^N$ is a weighting vector expressing the relative importance of each activity.

Then the average benefit after $K$ re-assignment operations is given by
\begin{equation}\label{eq:abobjfun}
J_{av}:=\frac 1K\sum_{k=0}^{K-1} f^k.
\end{equation}
Such a formalization has been used by Cominetti \emph{et al.} in the context of forest maintenance and exploitation~\cite{Cominetti2009}. 
In this work, an infinite sum is considered to study the total benefit of all the re-assignment operations.
Replace now $x(t)$ in the benefit~\eqref{eq:fk}, by its expression~(\ref{eq:sys1}--\ref{eq:sys2}). We get
\begin{equation*}
f^k = \frac1T \langle w, \tilde D x(T_k) + \tilde v \rangle =  \frac1T   \left(\langle \tilde D w, x(T_k)\rangle + \langle w, \tilde v \rangle\right) ,
\end{equation*}
where $\tilde D_{n n} = \int_{T_k}^{T_{k+1}} \Delta_{nn}(t) \D t$ and $\tilde v_{n} = \int_{T_k}^{T_{k+1}} \tau_{n}(t) \D t$.
The only term  which depends on the re-assignment process is $x(T_k)$.

From now on, we focus of on the steady state introduced in Theorem~\ref{th:asympt}, meaning that we assume that $x=x_{per}$.
Because of~\eqref{eq:per_cond}, one finds 
\begin{equation*}
\langle \tilde D w, x_{per}(T_k)\rangle
= \langle \tilde D w, (\I - PD)^{-1}P v\rangle,
\end{equation*}
meaning that the benefit is the same for each re-assignment process.
As a consequence, $f^k$ does not depend on $k$ and that the average benefit $J_{av}$ (see~\eqref{eq:abobjfun}) satisfies
\[J_{av}(P)=\frac 1T \left(J(P) + \langle w, \tilde v \rangle\right),\]
where 
\begin{equation}\label{eq:Jpgen}
J(P) := \langle u, (\I - PD)^{-1}P v\rangle,
\end{equation}
with $u=\tilde D w$. 
It follows that maximizing $J_{av}$ with respect to $P$ is equivalent to maximizing $J$ with respect to $P$.  
Since $\#\P=\#\S= N!$, an exhaustive test of all the possible controls is out of range for large value of $N$. 
Hence, the maximization of $J$ cannot be tackled in realistic cases where a good numerical accuracy is required.
To overcome this difficulty, we propose in this section an approximation of this problem whose optimum can be determined explicitly, with a negligible computational cost.
For this purpose, we expand the functional~\eqref{eq:Jpgen} as follows
\begin{equation*}
\langle u, (\I - PD)^{-1}P v\rangle = \sum_{l=0}^{+\infty}\langle u, (PD)^{l}P v\rangle = \langle u, P v\rangle + \sum_{l=1}^{+\infty}\langle u, (PD)^{l}P v\rangle,
\end{equation*}
and consider as an approximation the first term of this series, namely
\begin{equation}\label{eq:Jpapproxgen}
J^{\text{approx}}(P):=\langle u, P v\rangle.
\end{equation}
Without loss of generality (see Appendix~\ref{app:usort} for the details), we assume that the entries of $u$ are sorted in ascending order, meaning that $u_1 \leq \ldots \leq u_N$.
Note that optimizing $J^{\text{approx}}$ amounts to solving an assignment problem~\cite{Burkard2008}.
Indeed, we have for example
\begin{equation*}
\min_{P\in\P} J^{\text{approx}}(P) = \min_{\sigma\in\S} \sum_{n=1}^N u_n v_{\sigma(n)}.
\end{equation*}
The latter expression reads as an assignment problem associated with the cost matrix\cite[p.5]{Burkard2008} $[u_iv_j]_{(i,j=1,\ldots,N)}$.
To make our exposition self-contained, we give the solution of this problem in Section~\ref{sec:criterion}.

\begin{remark}
A fairly common approach to deal with permutation matrices in discrete or combinatorial optimization is to relax the problem by extending the optimization to the set of bistochastic matrices. 
As an example, this technique corresponds to the Kantorovitch relaxation considered in optimal transport~\cite{Kantorovich1942} (see also~\cite{Bertsimas1997} for a more general presentation of the linear case, and~\cite{Lieb1981} for a similar strategy in the context of quantum chemistry). 
This approach allows the optimization to be performed by gradient-type methods. 
At the theoretical level, the goal is then to prove that the convergence takes place towards extremal points, i.e. permutation matrices. 
We have tested this approach on the nonlinear problem~\eqref{eq:Jpgen}. 
Our experiments indicate that the obtained limits are neither always permutation matrices nor optimal, which leads us to conjecture the existence of local (non-global) maxima for this extended form of $J$.

\end{remark}

\subsection{Some technical lemmas}\label{Sec:techres}

Let us state some preliminary properties about the permutation set $\S$ that we will use in the next section. 
Given $k\in\N$, and two arbitrary permutations $\s, \ts\in\S$, let us define 
\begin{equation*}
\begin{split}
E_k(\s,\ts):=&\left\{n=1,\ldots, N \ | \ \s^k(n)\neq\ts^k(n) \right\},\\
G_k(\s,\ts):=&\{ n=1,\ldots, N \ | \ \forall k'\leq k,\ \s^{k'}(n)=\ts^{k'}(n) \},
\end{split}
\end{equation*}
and 
$m_k :=\#E_k(\s,\ts)$.
We have the following result.

\begin{lemma}\label{lem:EkGk}
For $k\in \N$, we have $m_{k}\leq k m_1$ and $\# G_k(\s,\ts)\geq \max(N-k m_1,0)$. 
\end{lemma}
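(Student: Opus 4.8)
The plan is to handle both assertions simultaneously by stratifying the index set $\{1,\ldots,N\}$ according to the first iterate at which $\s$ and $\ts$ begin to disagree. Concretely, I would introduce, for each $n$,
\[
\ell(n):=\min\{\,k'\geq 1\ :\ \s^{k'}(n)\neq\ts^{k'}(n)\,\}\ \in\ \{1,2,\ldots\}\cup\{+\infty\},
\]
with the convention $\min\emptyset=+\infty$. Two elementary observations then reduce everything to one counting estimate. First, if $n\in E_k(\s,\ts)$ then $\s^k(n)\neq\ts^k(n)$, so $k$ witnesses a disagreement and $\ell(n)\leq k$; hence $E_k(\s,\ts)\subseteq\{n:\ell(n)\leq k\}$. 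Second, $n\in G_k(\s,\ts)$ means precisely that no $k'\in\{1,\ldots,k\}$ produces a disagreement, i.e. $\ell(n)\geq k+1$; hence the complement of $G_k(\s,\ts)$ in $\{1,\ldots,N\}$ is exactly $\{n:\ell(n)\leq k\}$. Both bounds therefore follow once we know $\#\{n:\ell(n)\leq k\}\leq k\,m_1$.

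The key step is the estimate $\#\{n:\ell(n)=j\}\leq m_1$ for every $j\geq 1$. To prove it, fix $n$ with $\ell(n)=j$ and set $p:=\s^{j-1}(n)$. By definition of $\ell(n)$ we have $\s^{j-1}(n)=\ts^{j-1}(n)$ (this is trivial when $j=1$, since $\s^{0}=\ts^{0}=\I$ as maps), so $p=\ts^{j-1}(n)$ as well; on the other hand $\s^{j}(n)\neq\ts^{j}(n)$ rewrites, after peeling off one iterate, as $\s(p)\neq\ts(p)$, i.e. $p\in E_1(\s,\ts)$. Consequently $n\in\s^{-(j-1)}\big(E_1(\s,\ts)\big)$, and since $\s^{-(j-1)}$ is a bijection of $\{1,\ldots,N\}$ this set has cardinality $\#E_1(\s,\ts)=m_1$, which yields the claim.

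It then remains only to sum. The sets $\{n:\ell(n)=j\}$ for $j=1,\ldots,k$ are pairwise disjoint and their union is $\{n:\ell(n)\leq k\}$, so $\#\{n:\ell(n)\leq k\}\leq k\,m_1$. Combining this with the two inclusions from the first paragraph gives $m_k=\#E_k(\s,\ts)\leq k\,m_1$ and $\#G_k(\s,\ts)=N-\#\{n:\ell(n)\leq k\}\geq N-k\,m_1$; since a cardinality is nonnegative, the latter is $\geq\max(N-k\,m_1,0)$. The degenerate case $k=0$ is immediate, as $E_0(\s,\ts)=\emptyset$ and $G_0(\s,\ts)=\{1,\ldots,N\}$.

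I do not expect a genuine obstacle here; the statement is combinatorial and the argument above is short. The one point that does require care is to avoid the crude union bound $\#\big(\bigcup_{j\leq k}E_j(\s,\ts)\big)\leq\sum_{j\leq k}m_j$, which (even using $m_j\leq j\,m_1$, should one prove that first by a separate induction) only gives a bound quadratic in $k$ and is therefore too weak. The essential idea is to partition by the \emph{first} disagreement time $\ell(n)$ rather than to control each $E_j(\s,\ts)$ independently, since this is what makes each stratum a bijective image of $E_1(\s,\ts)$ and hence of size exactly $m_1$.
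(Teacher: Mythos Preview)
Your argument is correct. The stratification by first disagreement time $\ell(n)$ is clean, and the key inclusion $\{n:\ell(n)=j\}\subseteq\s^{-(j-1)}(E_1(\s,\ts))$ is verified properly (including the $j=1$ base case via $\s^{0}=\ts^{0}=\mathrm{Id}$). One cosmetic point: after proving this inclusion you write that the image set ``has cardinality $\#E_1(\s,\ts)=m_1$''; of course what you use is only the upper bound $\leq m_1$, which you stated correctly just before.

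The paper proceeds differently. It establishes two separate one-step recursions: for the first bound it splits $E_{k+1}$ according to membership in $E_1$ and shows $\s\big((\{1,\ldots,N\}\setminus E_1)\cap E_{k+1}\big)\subset E_k$, yielding $m_{k+1}\leq m_k+m_1$; for the second it shows $G_k=G_{k+1}\sqcup(\s^{-k}(E_1)\cap G_k)$, yielding $\#G_{k+1}\geq\#G_k-m_1$. Your approach unifies both conclusions under a single counting estimate $\#\{n:\ell(n)\leq k\}\leq k\,m_1$, observing that this set simultaneously contains $E_k$ and equals the complement of $G_k$. The underlying combinatorial fact is the same---indices whose first disagreement occurs at step $j$ are a bijective preimage of (a subset of) $E_1$---but you package it as a direct stratification rather than two inductions. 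This buys a shorter and more transparent proof; the paper's version, on the other hand, makes the recursive structure $m_{k+1}\leq m_k+m_1$ explicit, which could be of independent use.
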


\begin{proof}
To shorten notation, we write in this proof $E_k$ instead of $E_k(\s,\ts)$, $E_{k+1}$ instead of $E_{k+1}(\s,\ts)$, $G_k$ instead of $G_k(\s,\ts)$, etc. 
From the definition of $E_k$, we have:
\begin{equation*}
E_{k+1}=\left((\{1,\ldots, N\}\setminus E_1)\cap E_{k+1}\right) \cup (E_1\cap  E_{k+1}).
\end{equation*}
The first subset in the right-hand side satisfies
\begin{equation*}
\begin{split}
\s\left((\{1,\ldots, N\}\setminus E_1)\cap E_{k+1}\right) =\ts\left((\{1,\ldots, N\}\setminus E_1)\cap E_{k+1}\right) \subset E_k,
\end{split}
\end{equation*}
so that $\#\left((\{1,\ldots, N\}\setminus E_1)\cap E_{k+1}\right) \leq \# E_k =: m_k$.

On the other hand, $ (E_1\cap E_{k+1}) \subset E_1 $, hence $\# (E_1\cap E_{k+1}) \leq m_1$. 
As a consequence, $m_{k+1}\leq m_k + m_1$. This implies $m_{k}\leq k m_1$.

As for $G_k$, we have:
\begin{equation}\label{eq:Ens}
G_{k}=(G_{k+1}\cap G_k)\cup (\s^{-k}(E_1)\cap G_k).
\end{equation}
Indeed, let $n\in G_k$, i.e, $\s^k(n)=\ts^k(n)$. If $\s^{k+1}(n)=\ts^{k+1}(n)$, then $n\in G_{k+1}$.
Otherwise, $\s^{k+1}(n)\neq\ts^{k+1}(n)$, meaning that $\s^{k+1}(n)\neq\ts(\s^{k}(n))$ which implies $\s^{k}(n)=\ts^{k}(n)\in E_1$, so that $n\in \s^{-k}(E_1)$.
This proves~\eqref{eq:Ens}, and we get as a by-product
\begin{equation*}
(G_{k+1}\cap G_k)\cap (\s^{-k}(E_1)\cap G_k)= \emptyset.
\end{equation*}
Moreover, since $G_{k+1}\subset G_k$,  we get $G_{k+1}\cap G_k= G_{k+1}$.
It follows that
\begin{equation*}
\# G_k=\# G_{k+1}+ \#\{\s^{-k}(E_1)\cap G_k\}.
\end{equation*}
Since $\#\{\s^{-k}(E_1)\cap G_k\}\leq \# E_1 = m_1$, we obtain $\# G_{k+1} \geq \# G_k - m_1$.
The result follows.
\end{proof}

In what follows, a transposition in $\S$ between two elements $i\neq j$ is denoted by $(i \ j)$.
By abuse of notation, $(n\ n)$ denotes the identity for all $n=1,\ldots, N$.
Given a permutation $\sigma\in \S$, we consider the sequence of permutations $(\sigma_n)_{n=0,\ldots, N}$ defined by
\begin{equation}\label{eq:sig}
\begin{split}
&\sigma_0=\sigma \\
&\sigma_n=(n\ \sigma_{n-1}(n))\circ \sigma_{n-1}.
\end{split}
\end{equation}
For all $n\leq N$, it immediately follows from this definition that
\begin{equation*}
\sigma_n|_{\{1,\ldots, n\}} = Id|_{\{1,\ldots, n\}} \text{ and } \sigma_{N-1} = \sigma_N = Id,
\end{equation*}
where $Id$ denote the identity permutation. 
Let us give two additional properties of this sequence.

\begin{lemma}\label{lem:prop1}
Let $\sigma\in \S$ and $(\sigma_n)_{n=1,\ldots, N-1}$ defined by~\eqref{eq:sig}. 
One has:
\begin{equation*}
\left\{i=1,\ldots, N \ | \ \sigma(i)=i\right\}=\left\{i=1,\ldots, N \ | \ \forall n=1,\ldots, N-1,\ \sigma_n(i)=i\right\}.
\end{equation*}
\end{lemma}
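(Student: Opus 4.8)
The plan is to prove the two inclusions separately. The inclusion ``$\subseteq$'' is the easy direction: if $\sigma(i)=i$, then I claim $\sigma_n(i)=i$ for all $n=1,\ldots,N-1$, which I would establish by induction on $n$ using the recursion~\eqref{eq:sig}. Suppose $\sigma_{n-1}(i)=i$. Then the transposition applied at step $n$ is $(n\ \sigma_{n-1}(n))$. If $i\geq n$, note that actually $\sigma_{n-1}|_{\{1,\ldots,n-1\}}=Id$, so $\sigma_{n-1}(n)\notin\{1,\ldots,n-1\}$ and in particular, since $i\geq n$, the transposition $(n\ \sigma_{n-1}(n))$ fixes $i$ unless $i=n$ or $i=\sigma_{n-1}(n)$; but $\sigma_{n-1}(i)=i$ together with $i=\sigma_{n-1}(n)$ would force $i=n$ by injectivity, and if $i=n$ then $\sigma_n(n)=n$ by the defining property $\sigma_n|_{\{1,\ldots,n\}}=Id$. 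In all cases $\sigma_n(i)=i$. Actually the cleanest phrasing: once $\sigma_{n-1}(i)=i$ and $i\le n$, we get $\sigma_n(i)=i$ directly from $\sigma_n|_{\{1,\dots,n\}}=Id$; and if $i>n$, then $\sigma_{n-1}(n)\ne i$ (else $\sigma_{n-1}$ would not be injective, since $\sigma_{n-1}(i)=i=\sigma_{n-1}(n)$ with $i\ne n$) and $n\ne i$, so the transposition fixes $i$. This closes the induction.

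For the reverse inclusion ``$\supseteq$'', I would argue by contraposition: if $\sigma(i)\neq i$ I must exhibit some $n\in\{1,\ldots,N-1\}$ with $\sigma_n(i)\neq i$. The natural candidate is to track the point $i$ through the sequence and find the first stage at which something moves it or fails to fix it. Concretely, let $i$ be such that $\sigma(i)\ne i$. Set $j=\min(i,\sigma(i))$, note $j<N$ since not both of $i,\sigma(i)$ can equal $N$ when $\sigma(i)\ne i$... actually more carefully: the relevant index is governed by when the transposition $(n\ \sigma_{n-1}(n))$ first touches $i$. I would show that if $\sigma_n(i)=i$ held for all $n=1,\ldots,N-1$, then in particular $\sigma_1(i)=i$, and unwinding~\eqref{eq:sig} at $n=1$, namely $\sigma_1=(1\ \sigma(1))\circ\sigma$, one can trace back constraints on $\sigma(i)$; iterating this across $n=1,2,\ldots$ forces $\sigma(i)=i$. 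The technically cleanest route is probably: observe that $\sigma = t_1\circ t_2\circ\cdots$ can be reconstructed, i.e. $\sigma = (1\ \sigma_0(1))\circ(2\ \sigma_1(2))\circ\cdots\circ(N-1\ \sigma_{N-2}(N-1))$ reversed appropriately from $\sigma_{N-1}=Id$, so that if every $\sigma_n$ fixes $i$ then every transposition $(n\ \sigma_{n-1}(n))$ must fix $i$ (since $\sigma_n=(n\ \sigma_{n-1}(n))\circ\sigma_{n-1}$ and both $\sigma_n,\sigma_{n-1}$ fix $i$ implies $(n\ \sigma_{n-1}(n))$ fixes $i$), whence $\sigma=\sigma_0$, being a composition of maps each fixing $i$, also fixes $i$ — wait, I need $\sigma$ itself in terms of these, which is immediate since $\sigma_0=\sigma$ and $\sigma_{N-1}=Id$, so $\sigma = (1\ \sigma_0(1))^{-1}\circ\cdots = (1\ \sigma_0(1))\circ\sigma_1 = \cdots$ telescoping down; all factors fix $i$, hence so does $\sigma$.

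So the key structural observation, which makes both directions fall out, is: \emph{from $\sigma_n=(n\ \sigma_{n-1}(n))\circ\sigma_{n-1}$, if both $\sigma_{n-1}$ and $\sigma_n$ fix a point $i$ then the transposition $(n\ \sigma_{n-1}(n))$ fixes $i$, and conversely}. I expect the main obstacle to be purely bookkeeping: making the contrapositive argument airtight requires care that ``the transposition $(n\ \sigma_{n-1}(n))$ fixes $i$'' is handled correctly in the degenerate case where $\sigma_{n-1}(n)=n$ (so the transposition is the identity by the stated abuse of notation), and ensuring the telescoping reconstruction $\sigma = (1\ \sigma_0(1))\circ\sigma_1$, iterated, is written without index errors. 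No deep idea is needed beyond this observation and induction on $n$.
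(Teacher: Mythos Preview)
Your argument for the inclusion $\subseteq$ is correct and is essentially the paper's own proof: induction on $n$, using that if $\sigma_{n-1}(i)=i$ then the transposition $(n\ \sigma_{n-1}(n))$ must fix $i$ (with the cases $i=n$ and $i=\sigma_{n-1}(n)$ handled via injectivity of $\sigma_{n-1}$).

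Your argument for $\supseteq$, however, has a real gap. The structural observation you isolate---that if both $\sigma_{n-1}$ and $\sigma_n$ fix $i$ then $(n\ \sigma_{n-1}(n))$ fixes $i$---requires $\sigma_{n-1}(i)=i$. For $n\geq 2$ the hypothesis supplies this, but for $n=1$ it would require $\sigma_0(i)=\sigma(i)=i$, which is exactly the conclusion you are after. Your telescoping therefore yields only $\sigma(i)=(1\ \sigma(1))(i)$, and this need not equal $i$ when $i\in\{1,\sigma(1)\}$. In fact the inclusion $\supseteq$ is false as the lemma is literally stated: take $N=3$ and $\sigma=(1\ 2)$; then $\sigma_1=(1\ 2)\circ(1\ 2)=Id$ and $\sigma_2=Id$, so every $i\in\{1,2,3\}$ lies in the right-hand set, yet $\sigma(1)\neq 1$.

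The paper's own proof establishes only the direction $\subseteq$ and then concludes. So either the intended claim is the one-sided inclusion, or the quantifier on the right-hand side was meant to start at $n=0$ (in which case $\supseteq$ is trivial because $\sigma_0=\sigma$). In either reading there is nothing further to prove: only $\subseteq$ is used downstream, and your argument for that direction is sound and matches the paper's.
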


\begin{proof}
Given $i$ with $1\leq i \leq N$, such that $\s(i)=i$, let us prove that $\sigma_n(i)=i$ by induction on $n$. 
Since $\sigma_0=\sigma$, the result holds for $n=0$.
Suppose it holds at a rank $n-1$, meaning that $\s_{n-1}(i)=i$. 
By definition of $(\sigma_n)_{n=1,\ldots, N}$, one has:
\begin{equation*}
\sigma_n(i)= (n\ \sigma_{n-1}(n))\circ \sigma_{n-1}(i) =(n\ \sigma_{n-1}(n))(i).
\end{equation*}
If $i=n$, then $(n\ \sigma_{n-1}(n))(i)=\sigma_{n-1}(n)=\sigma_{n-1}(i)=i$.
If $i=\sigma_{n-1}(n)$, then $i=\sigma_{n-1}(i)=\sigma_{n-1}(n)$ and $i=n$, so that we conclude as in the previous case.
In the other cases, $\sigma_n(i)=\sigma_{n-1}(i)=i$. 
The result follows.
\end{proof}

\begin{lemma}\label{lem:prop2}
Let $i, j\in\{1,\ldots, N\}$, with $i < j$.
Let $\sigma\in\S$, with $\sigma = (i \ j)\circ\sigma'$, where $(i \ j)$ and $\sigma'\in\S$ have disjoint supports, i.e., $\sigma'(i)=i$ and $\sigma'(j)=j$. 
The sequence defined by~\eqref{eq:sig} satisfies $\sigma_{j}=\sigma_{j-1}$.
\end{lemma}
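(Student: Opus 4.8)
The plan is to reduce the statement to the single equality $\sigma_{j-1}(j)=j$. Indeed, once this is known, the transposition $(j\ \sigma_{j-1}(j))=(j\ j)$ appearing in~\eqref{eq:sig} at step $n=j$ is the identity, so $\sigma_j=\sigma_{j-1}$. Note also that the hypothesis on $\sigma$ implies $\sigma(i)=j$ and $\sigma(j)=i$, since $\sigma'$ fixes both $i$ and $j$. Throughout I will use the following elementary description of one step of~\eqref{eq:sig}: setting $q:=\sigma_{n-1}(n)$ and $p:=\sigma_{n-1}^{-1}(n)$, either $q=n$, in which case $\sigma_n=\sigma_{n-1}$, or $q\neq n$ and then $\sigma_n(n)=n$, $\sigma_n(p)=q$, and $\sigma_n(m)=\sigma_{n-1}(m)$ for every $m\notin\{n,p\}$; this follows at once from the identity $\sigma_n(m)=(n\ q)\bigl(\sigma_{n-1}(m)\bigr)$.

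First I would show, by induction on $n$, that $\sigma_n(i)=j$ and $\sigma_n(j)=i$ for all $n=0,\ldots,i-1$. The base case $n=0$ is the hypothesis. For the step from $n-1$ to $n$ with $n\leq i-1$, I check that at step $n$ the positions $i$ and $j$ are left untouched, i.e.\ $i,j\notin\{n,p\}$ with $p=\sigma_{n-1}^{-1}(n)$: one has $n<i<j$, so $n\neq i,j$; moreover $p\neq i$ because $\sigma_{n-1}(i)=j\neq n$, and $p\neq j$ because $\sigma_{n-1}(j)=i\neq n$ (both using the induction hypothesis together with $n<i<j$). Hence $\sigma_n$ agrees with $\sigma_{n-1}$ at $i$ and at $j$, which closes the induction. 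In particular $\sigma_{i-1}(i)=j$ and $\sigma_{i-1}(j)=i$.

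Next I would examine step $n=i$ itself: since $\sigma_{i-1}(i)=j$, we have $\sigma_i=(i\ j)\circ\sigma_{i-1}$, and the one-step description above, applied with $p=\sigma_{i-1}^{-1}(i)=j$, gives $\sigma_i(j)=\sigma_{i-1}(i)=j$. A second induction then shows $\sigma_n(j)=j$ for all $n=i,\ldots,j-1$: at a step $n$ with $i+1\leq n\leq j-1$ one has $n\neq j$, and since $\sigma_{n-1}(j)=j\neq n$ (as $n\leq j-1$) also $p=\sigma_{n-1}^{-1}(n)\neq j$, whence $\sigma_n(j)=\sigma_{n-1}(j)=j$. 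This yields $\sigma_{j-1}(j)=j$, and therefore $\sigma_j=\sigma_{j-1}$ by the reduction explained above.

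The argument involves no real difficulty; the step requiring the most care is the bookkeeping in the one-step description of~\eqref{eq:sig}, namely verifying at each stage that the "pivot" position $p=\sigma_{n-1}^{-1}(n)$ never coincides with $i$ or $j$ while those positions are meant to stay frozen — which in every case comes down to the inequalities $n<i<j$ together with the current inductive information on $\sigma_{n-1}(i)$ and $\sigma_{n-1}(j)$. The degenerate cases $i=1$ (the first induction is vacuous) and $j=i+1$ (the second reduces to its base case) are covered automatically by the same reasoning.
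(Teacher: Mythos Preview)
Your proof is correct and follows essentially the same route as the paper: both reduce the claim to the equality $\sigma_{j-1}(j)=j$ and establish it by analysing the three regimes $n<i$, $n=i$, and $i<n\leq j-1$. The only difference is cosmetic: the paper carries along the auxiliary sequence $(\sigma'_n)$ built from $\sigma'$ and uses the factorisation $\sigma_n=(i\ j)\circ\sigma'_n$ for $n<i$ (together with Lemma~\ref{lem:prop1} to conclude $\sigma'_n(j)=j$), whereas you track the values $\sigma_n(i)$ and $\sigma_n(j)$ directly via the ``one-step'' description of~\eqref{eq:sig}, which makes your argument slightly more self-contained.
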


\begin{proof}
From~\eqref{eq:sig}, one has
\begin{equation*}
\sigma_{j} = (j \ \sigma_{j-1}(j)) \circ \sigma_{j-1}.
\end{equation*}
We need to prove that $\sigma_{j-1}(j)=j$.
Since $\sigma'$ and $(i \ j)$ are disjoint, then for $n<i$, $\sigma_n = (i \ j)\circ\sigma'_n$, where $\sigma'_n$ is  defined by~\eqref{eq:sig}, with the initial term $\s'_0=\s'$. 
In particular, $\sigma_n(i)=j$ for $n<i$.

In the case $n=i$, one has
\begin{equation*}
\sigma_{i} = (i \ \sigma_{i-1}(i)) \circ \sigma_{i-1} =  (i \ j) \circ \sigma_{i-1} = (i \ j)\circ(i \ j)\circ\sigma'_{i-1} = \sigma'_{i-1}.
\end{equation*}
In particular, $\sigma_i(j)=j$.

Finally, since $\s'_{i-1}(i)=i$, we find that $\s'_{i}=\s'_{i-1}$, and it follows by induction that for $n>i$, $\sigma_n = \sigma'_n$, which means $\sigma_n(j) = j$. 
In particular $\sigma_{j-1}(j) = j$.
This concludes the proof.
\end{proof}

The sequence $(\s_n)_{n=0,\ldots, N}$ can be used to decompose $J(\I)-J(P)$ for an arbitrary $P\in \P$, as stated in the next Lemma.

\begin{lemma}
Let $\s \in \S$ and $P\in\P$ the associated permutation matrix, we have:
\begin{equation*}
\langle u , (\I - P)v \rangle = \sum_{n=1}^{N-1} (u_n-u_{\sigma^{-1}_{n-1}(n)})(v_n-v_{\sigma_{n-1}(n)}).
\end{equation*}
\end{lemma}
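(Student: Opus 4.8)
The plan is to telescope along the auxiliary sequence $(\sigma_n)_{n=0,\ldots,N}$ defined in~\eqref{eq:sig}. For a permutation $\rho\in\S$ with associated permutation matrix $P_\rho\in\P$, set $g(\rho):=\langle u,(\I-P_\rho)v\rangle$. Since $\sigma_0=\sigma$ and $\sigma_{N-1}=Id$ (so that $P_{\sigma_{N-1}}=\I$ and $g(\sigma_{N-1})=0$), we have
\begin{equation*}
\langle u,(\I-P)v\rangle=g(\sigma_0)=g(\sigma_0)-g(\sigma_{N-1})=\sum_{n=1}^{N-1}\big(g(\sigma_{n-1})-g(\sigma_n)\big).
\end{equation*}
Hence it suffices to show that, for every $n\in\{1,\ldots,N-1\}$, the increment $g(\sigma_{n-1})-g(\sigma_n)$ equals the $n$-th summand of the claimed identity.

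To evaluate this increment I would first put $g$ in a convenient shape: reindexing the scalar product by $j=\rho^{-1}(\cdot)$ gives $\langle u,P_\rho v\rangle=\sum_{j=1}^{N}u_{\rho(j)}v_j$, whence $g(\rho)=\sum_{j=1}^{N}(u_j-u_{\rho(j)})v_j$, so that $g(\sigma_{n-1})-g(\sigma_n)=\sum_{j=1}^{N}(u_{\sigma_n(j)}-u_{\sigma_{n-1}(j)})v_j$. Now $\sigma_n=(n\ \sigma_{n-1}(n))\circ\sigma_{n-1}$. If $\sigma_{n-1}(n)=n$ the transposition is the identity, $\sigma_n=\sigma_{n-1}$, and the increment vanishes, in agreement with the $n$-th summand. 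Otherwise, write $m:=\sigma_{n-1}(n)\neq n$; composing on the left by the transposition exchanging $n$ and $m$ leaves the value of $\sigma_{n-1}$ unchanged at every index except $j=n$ (where the value changes from $m$ to $n$) and $j=\sigma_{n-1}^{-1}(n)$ (where it changes from $n$ to $m$). Hence only these two indices contribute, and they combine into $(u_n-u_m)v_n+(u_m-u_n)v_{\sigma_{n-1}^{-1}(n)}=(u_n-u_{\sigma_{n-1}(n)})(v_n-v_{\sigma_{n-1}^{-1}(n)})$, i.e. the $n$-th summand. Summing over $n$ yields the result.

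The only genuinely delicate part is the index bookkeeping in the second paragraph: one must correctly identify the two-element set $\{n,\ \sigma_{n-1}^{-1}(n)\}$ on which $\sigma_{n-1}$ and $\sigma_n$ differ, and keep the signs of the two surviving terms straight; the structural facts $\sigma_n|_{\{1,\ldots,n\}}=Id|_{\{1,\ldots,n\}}$ and $\sigma_{N-1}=\sigma_N=Id$ recorded after~\eqref{eq:sig} are exactly what make the telescoping close over the range $n=1,\ldots,N-1$. Everything else is routine. (Alternatively one could expand $\langle u,(\I-P)v\rangle=\sum_{n}(u_n-u_{\sigma(n)})v_n$ directly and reorganize the sum by successively freezing the images of $1,2,\ldots$; this is essentially the same computation, but the telescoping through $(\sigma_n)$ is cleaner and is reused elsewhere.)
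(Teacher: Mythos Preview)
Your telescoping through the auxiliary sequence $(\sigma_n)$ is exactly the paper's approach: the paper sets $S_j:=\sum_n u_n v_{\sigma_j(n)}$, computes the two--term difference $S_j-S_{j-1}$, and sums over $j$. Your function $g$ is the dual expansion, with the permutation acting on the $u$-index instead of the $v$-index.

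There is, however, a bookkeeping slip at the very end. Your (correct) computation yields
\[
g(\sigma_{n-1})-g(\sigma_n)=(u_n-u_{\sigma_{n-1}(n)})\,(v_n-v_{\sigma_{n-1}^{-1}(n)}),
\]
whereas the summand in the lemma is $(u_n-u_{\sigma_{n-1}^{-1}(n)})(v_n-v_{\sigma_{n-1}(n)})$: the roles of $\sigma_{n-1}$ and $\sigma_{n-1}^{-1}$ are exchanged, and these two expressions do \emph{not} coincide in general (take $N=3$, $\sigma=(1\,2\,3)$, $n=1$). The source of the mismatch is a convention issue in the paper itself: the proof there tacitly uses $\langle u,Pv\rangle=\sum_n u_n v_{\sigma(n)}$, while the Notation section (which you followed) declares $(Pv)_n=v_{\sigma^{-1}(n)}$. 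To obtain the displayed identity verbatim, you should keep the sum in the form $\sum_n u_n v_{\sigma_j(n)}$ (letting the $v$-index move, as the paper does); the two indices where $\sigma_j$ and $\sigma_{j-1}$ differ are still $\{j,\sigma_{j-1}^{-1}(j)\}$, and the product then falls out exactly as stated. Your argument is otherwise sound and is the same proof modulo this choice of side.
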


\begin{proof}
Given $j\in\{0,\ldots,N\}$, define $S_j = \sum_{n=1}^{N}u_n v_{\sigma_j(n)}$.
Since $\sigma_j(n)$ and $\sigma_{j-1}(n)$ might only differ for $n=j$ and $n=\sigma^{-1}_{j-1}(j)$, we have
\begin{equation*}
\begin{split}
S_j-S_{j-1} &= \sum_{n=j}^N u_n(v_{\sigma_j(n)} - v_{\sigma_{j-1}(n)})\\
&= u_j(v_{\sigma_j(j)} - v_{\sigma_{j-1}(j)}) + u_{\sigma^{-1}_{j-1}(j)}(v_{\sigma_j(\sigma^{-1}_{j-1}(j))} - v_{\sigma_{j-1}(\sigma^{-1}_{j-1}(j))})\\
&=u_j(v_j - v_{\sigma_{j-1}(j)})+u_{\sigma^{-1}_{j-1}(j)}(v_{\sigma_{j-1}(j)} - v_j)\\
&=(u_j-u_{\sigma^{-1}_{j-1}(j)})(v_j - v_{\sigma_{j-1}(j)}).
\end{split}
\end{equation*}
The result then follows from $\langle u , (\I - P)v \rangle = S_{N-1}-S_0$.
\end{proof}

\subsection{Solutions of the optimization problems}\label{sec:criterion}

The previous lemma enables us to solve the problems $\max_{P\in\P}J^{\text{approx}}(P)$ and $\min_{P\in\P}J^{\text{approx}}(P)$.
Recall that the entries of $u$ are sorted in ascending order.

\begin{lemma}\label{lem:uPv}
Let $\sigma_{+},\ \sigma_{-}\in \S$ such that $v_{\sigma_{+}(1)} \leq v_{\sigma_{+}(2)} \cdots \leq v_{\sigma_{+}(N)}$ and $v_{\sigma_{-}(N)} \leq v_{\sigma_{-}(N-1)} \leq \cdots \leq v_{\sigma_{-}(1)}$ and $P_+,\ P_- \in \P$, the corresponding permutation matrices.
Then
\begin{equation*}
P_+ =  {\rm argmax}_{P\in \P} J^{\text{approx}}(P), \quad P_- =  {\rm argmin}_{P\in \P} J^{\text{approx}}(P).
\end{equation*}
\end{lemma}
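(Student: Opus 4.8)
The plan is to reduce the statement to the decomposition of the previous lemma by a change of variables, and then to read off the sign of each summand using the fact that the auxiliary permutations $\sigma_{n-1}$ of~\eqref{eq:sig} fix $\{1,\ldots,n-1\}$. Existence of $\sigma_+,\sigma_-$ (hence of $P_+,P_-$) is clear, since it suffices to sort the entries of $v$. I would treat the argmax claim in detail; the argmin claim is entirely analogous.

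Fix an arbitrary $P\in\P$, and set $\tilde v := P_+ v$, so that by the definition of $\sigma_+$ the vector $\tilde v$ has nondecreasing entries, $\tilde v_1\leq\cdots\leq\tilde v_N$. Writing $Q := P P_+^{-1}\in\P$ and letting $\rho\in\S$ be the permutation associated with $Q$, one has $Pv = Q\tilde v$, hence
\[
J^{\text{approx}}(P_+) - J^{\text{approx}}(P) = \langle u, P_+v\rangle - \langle u, Pv\rangle = \langle u, (\I - Q)\tilde v\rangle .
\]
Applying the previous lemma to $Q$ and $\tilde v$ (in place of $P$ and $v$) yields
\[
\langle u, (\I - Q)\tilde v\rangle = \sum_{n=1}^{N-1}\big(u_n - u_{\rho^{-1}_{n-1}(n)}\big)\big(\tilde v_n - \tilde v_{\rho_{n-1}(n)}\big),
\]
where $(\rho_n)_{n=0,\ldots,N}$ is the sequence~\eqref{eq:sig} built from $\rho$.

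Next I would argue that every summand is nonnegative. By the property $\sigma_n|_{\{1,\ldots,n\}} = Id|_{\{1,\ldots,n\}}$ noted after~\eqref{eq:sig}, the permutation $\rho_{n-1}$ fixes $\{1,\ldots,n-1\}$, and so does $\rho_{n-1}^{-1}$; consequently $\rho_{n-1}(n)\geq n$ and $\rho_{n-1}^{-1}(n)\geq n$. Since the entries of $u$ are sorted in ascending order, $u_n - u_{\rho^{-1}_{n-1}(n)}\leq 0$; since $\tilde v$ is also nondecreasing, $\tilde v_n - \tilde v_{\rho_{n-1}(n)}\leq 0$. The product of these two nonpositive numbers is nonnegative, so $\langle u, (\I - Q)\tilde v\rangle\geq 0$, i.e. $J^{\text{approx}}(P_+)\geq J^{\text{approx}}(P)$. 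As $P\in\P$ was arbitrary, $P_+ = \mathrm{argmax}_{P\in\P}J^{\text{approx}}(P)$. For $P_-$ I would repeat the computation with $\hat v := P_- v$, which has nonincreasing entries by the definition of $\sigma_-$: with $Q' := PP_-^{-1}$ one gets $J^{\text{approx}}(P_-) - J^{\text{approx}}(P) = \langle u, (\I - Q')\hat v\rangle$, a sum whose generic term is $(u_n - u_{(\rho')^{-1}_{n-1}(n)})(\hat v_n - \hat v_{\rho'_{n-1}(n)})$; the first factor is still $\leq 0$, while the second is now $\geq 0$ because $\hat v$ is nonincreasing and $\rho'_{n-1}(n)\geq n$. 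Hence $\langle u, (\I - Q')\hat v\rangle\leq 0$ and $P_- = \mathrm{argmin}_{P\in\P}J^{\text{approx}}(P)$.

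The argument is short once the decomposition lemma is available; essentially it is the rearrangement inequality, recovered here from~\eqref{eq:sig}. The only point requiring care is the bookkeeping in the change of variables $\tilde v = P_+v$ — checking that $Pv = (PP_+^{-1})\tilde v$ and tracking the composition/inverse conventions so that $\rho_{n-1}(n)$ and $\rho^{-1}_{n-1}(n)$ end up on the correct side of $n$ — which is what lets the signs of the two factors align.
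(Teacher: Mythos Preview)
Your proposal is correct and follows essentially the same route as the paper: substitute the sorted vector $\tilde v$ (the paper calls it $w$), rewrite $J^{\text{approx}}(P_+)-J^{\text{approx}}(P)$ as $\langle u,(\I-Q)\tilde v\rangle$ with $Q=PP_+^{-1}$, apply the preceding decomposition lemma, and use $\rho_{n-1}(n)\geq n$, $\rho_{n-1}^{-1}(n)\geq n$ together with the monotonicity of $u$ and $\tilde v$ to get the sign of each summand. Your remark that the only delicate point is the bookkeeping in the substitution is apt---under the paper's convention $(PW)_n=W_{\sigma^{-1}(n)}$ one has to be careful whether it is $P_+v$ or $P_+^{-1}v$ whose entries are sorted, but this does not affect the structure of the argument.
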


\begin{proof}
Let $P\in \P$ and $\sigma\in \S$ the associated permutation, we have
\begin{equation}\label{eq:lowboundwrite}
\begin{split}
\langle u, (P_+ - P)v \rangle &= \langle u, (\I - PP_+^{-1})w
\rangle \\
                              &= \sum_{n=1}^{N-1} (u_n-u_{(\sigma'_{n-1})^{-1}(n)})(w_n-w_{\sigma'_{n-1}(n)}),
\end{split}
\end{equation}
where $w=(w_n)_{n=1}^N := (v_{\sigma_{+}(n)})_{n=1}^N$ and $\sigma'_{n}$ is the sequence defined by~\eqref{eq:sig} with $\sigma':=\sigma^{-1}_+ \circ \sigma$ the permutation associated with $PP_+^{-1}$.
Since $(w_n)_{n=1}^N$ by its definition is an increasing sequence, $\sigma'_{n-1}(n)\geq n$ and $(\sigma'_{n-1})^{-1}(n)\geq n$, we find that $\langle u, (P_+ - P)v \rangle\geq 0$.
The proof for the problem $\min_{P\in\P}\langle u ,P v \rangle$ is similar.
\end{proof}

We immediately deduce from this lemma that once $u$ and $v$ are given, the matrix $P_+$, $P_-$ of Lemma~\ref{lem:uPv} can be determined explicitly.
More precisely, $P_+$ is the matrix corresponding to the permutation which associates the largest coefficient of $u$ with the largest coefficient of $v$, the second-largest coefficient with the second-largest, and so on. 
In the same way, $P_-$ is the matrix corresponding to the permutation which associates the largest coefficient of $u$ with the smallest coefficient of $v$, the second-largest coefficient with the second-smallest, and so on.

\begin{remark}\label{rem:uvstrict}
The optimal matrices $P_+$ and $P_-$ are not unique as soon as either $u$ or $v$ contains at least two identical entries.
\end{remark}

We focus now on the case where $u$ as well as $v$ have entries with a constant sign.
Since the results in this section hold both for minimization and maximization problems, we can assume without loss of generality that $u, v$ are both positive.
Using the properties given in the previous section, we will show that in some cases, the problem $\max_{P\in \P}J(P)$ (resp. $\min_{P\in \P}J(P)$) and $\max_{P\in \P}J^{\text{approx}}(P)$ (resp. $\min_{P\in \P}J^{\text{approx}}(P)$) have the same solution.

We keep the notation of Lemma~\ref{lem:uPv}. 
Define for $n=1, \ldots, N$, 
\begin{equation}\label{eq:pn}
\widetilde{p}_n:=\min_{i , j=1,\ldots, N, i\neq n, j\neq n}|(u_n-u_i)(v_{\sigma_+(n)}-v_{\sigma_+(j)})|.
\end{equation}
Denote by $i_n$ and $j_n$ the solutions of the previous problem.
Since $u_n, v_{\sigma_+(n)}$ are sorted in ascending order, we find immediately that if $n=1$ (resp. $N$), then $i_n=j_n=2$ (resp. $i_n=j_n=N-1$).
Otherwise, $i_n=n-1$ or $i_n=n+1$, and the same result holds for $j_n$.
Sort $(\widetilde{p}_n)_{n=1}^N$ and denote by $(p_n)_{n=1}^N$ the resulting sequence, i.e., $p_1\leq,\ldots,\leq p_N$.
Define then for $m=1,\ldots,N$
\begin{equation}\label{eq:sm}
s_m := \sum_{n=1}^m p_n,
\end{equation}
and
\begin{equation}\label{eq:Fm}
F_m^-:=\sum_{n=1}^{\min(m,N)} u_n v_{\sigma_-(N-m+n)}, \quad F_m^+:=\sum_{n=\max(1,N-m+1)}^N u_n v_{\sigma_+(n)}.
\end{equation}
From the definition of these sequences, we have $F_m^+\geq F_m^-$. 
See Appendix~\ref{app:Fm} for the case where $u$ or $v$ negative. 
We are now in a position to give the main result of this section.

\begin{theorem}\label{thm:main}
Assume that $u$ and $v$ have positive entries and define
\begin{equation}\label{eq:phi}
\phi(m_1) := \frac 1{s_{\left \lceil{\frac{m_1}2}\right \rceil}} \Big(\sum_{l=1}^{+\infty} d_{\max}^l F_{(l+1)m_1}^+ - d_{\min}^l F_{(l+1)m_1}^-  \Big),
\end{equation}
where $m_1$ refers to the notation in Lemma~\ref{lem:EkGk}, $d_{\max}:=\max_{n=1,\ldots,N}(d_{n})$ and $d_{\min}:=\min_{n=1,\ldots,N}(d_{n})$.
Assume that:
\begin{equation}\label{eq:thmcon}
\max_{m_1\geq 2} \phi(m_1)\leq 1.
\end{equation}
Then the problem $\max_{P\in \P}\langle u ,(\I-P D)^{-1} P v \rangle$ (resp. $\min_{P\in \P}\langle u ,(\I-P D)^{-1} P v \rangle$) and the problem $\max_{P\in \P}\langle u ,P v \rangle$ (resp. $\min_{P\in \P}\langle u ,P v \rangle$) have the same solution.
\end{theorem}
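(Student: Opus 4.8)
The plan is to split $J=J^{\text{approx}}+R$ with $R(P):=\sum_{l\geq 1}\langle u,(PD)^lPv\rangle$ (the tail of the Neumann series for $(\I-PD)^{-1}$, which converges because $\|(PD)^l\|_\infty\leq d_{\max}^l$ and $d_{\max}<1$, as in the proof of Theorem~\ref{th:asympt}), and to show that the improvement obtained on $J^{\text{approx}}$ by using the explicit optimal permutation outweighs the worst possible degradation of the tail $R$. I will treat the maximization case, the minimization one being symmetric. Let $P_+$ be the maximizer of $J^{\text{approx}}$ from Lemma~\ref{lem:uPv}, with associated permutation $\sigma_+$, let $P\in\P$ be arbitrary with permutation $\sigma$, and set $m_1:=\#\{n\ :\ \sigma(n)\neq\sigma_+(n)\}$. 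If $m_1=0$ then $P=P_+$ and there is nothing to do, and $m_1=1$ is impossible, so I may assume $m_1\geq 2$. It suffices to prove $J^{\text{approx}}(P_+)-J^{\text{approx}}(P)\geq R(P)-R(P_+)$.

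For the left-hand side I would argue as in the proof of Lemma~\ref{lem:uPv}: writing $J^{\text{approx}}(P_+)-J^{\text{approx}}(P)=\langle u,(\I-PP_+^{-1})w\rangle$ with $w$ the vector of entries of $v$ sorted in ascending order, the last Lemma of Subsection~\ref{Sec:techres} represents this as $\sum_{n=1}^{N-1}(u_n-u_{(\sigma'_{n-1})^{-1}(n)})(w_n-w_{\sigma'_{n-1}(n)})$, where $\sigma'$ is the permutation of $PP_+^{-1}$ and $(\sigma'_n)$ is the sequence~\eqref{eq:sig}. Every summand is nonnegative, and the $n$-th one is nonzero only when $\sigma'_{n-1}(n)\neq n$, in which case both factors are differences of entries whose indices differ from $n$, so the product is at least $\widetilde{p}_n$ (cf.~\eqref{eq:pn}). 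Since $\sigma'$ moves exactly $m_1$ points, at least $\lceil m_1/2\rceil$ of these summands are nonzero — this is where I would use Lemmas~\ref{lem:prop1}--\ref{lem:prop2}, or equivalently the elementary fact that a permutation moving $m_1$ points is a product of at least $m_1-\lfloor m_1/2\rfloor=\lceil m_1/2\rceil$ transpositions. As these nonzero terms sit at distinct indices, their sum is at least the sum of the $\lceil m_1/2\rceil$ smallest of all the $\widetilde{p}_n$, namely $s_{\lceil m_1/2\rceil}$. Hence $J^{\text{approx}}(P_+)-J^{\text{approx}}(P)\geq s_{\lceil m_1/2\rceil}$.

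For the right-hand side I would use $((PD)^lPv)_n=\big(\prod_{j=1}^{l}d_{\sigma^{-j}(n)}\big)v_{\sigma^{-(l+1)}(n)}$, so every such product lies in $[d_{\min}^l,d_{\max}^l]$ (and likewise with $\sigma_+$). For $n\in G_{l+1}(\sigma^{-1},\sigma_+^{-1})$ the $n$-th contributions to $\langle u,(PD)^lPv\rangle$ and to $\langle u,(P_+D)^lP_+v\rangle$ coincide, so the difference of these two scalar products is carried by the complement $B_l$ of $G_{l+1}(\sigma^{-1},\sigma_+^{-1})$; since one checks that $\#\{n:\sigma^{-1}(n)\neq\sigma_+^{-1}(n)\}=m_1$ as well, Lemma~\ref{lem:EkGk} gives $\#B_l\leq\min\big((l+1)m_1,N\big)$. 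On $B_l$ I would bound the $\sigma$-part above by $d_{\max}^l\sum_{n\in B_l}u_nv_{\sigma^{-(l+1)}(n)}$ and the $\sigma_+$-part below by $d_{\min}^l\sum_{n\in B_l}u_nv_{\sigma_+^{-(l+1)}(n)}$, recognising each sum as a partial assignment sum over $\#B_l$ indices and controlling it by $F^+_{(l+1)m_1}$, resp.\ $F^-_{(l+1)m_1}$ (see~\eqref{eq:Fm}). Summing over $l\geq 1$ gives $R(P)-R(P_+)\leq\sum_{l\geq 1}\big(d_{\max}^lF^+_{(l+1)m_1}-d_{\min}^lF^-_{(l+1)m_1}\big)=s_{\lceil m_1/2\rceil}\,\phi(m_1)$.

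Combining the two bounds, assumption~\eqref{eq:thmcon} yields $\phi(m_1)\leq 1$ (in particular $s_{\lceil m_1/2\rceil}>0$), whence $R(P)-R(P_+)\leq s_{\lceil m_1/2\rceil}\leq J^{\text{approx}}(P_+)-J^{\text{approx}}(P)$, i.e.\ $J(P)\leq J(P_+)$; since $P$ is arbitrary, $P_+$ solves $\max_{P\in\P}J(P)$. The minimization statement follows in the same way with $P_-$ in the role of $P_+$, the bound $\#B_l\leq(l+1)m_1$ and the quantities $F^\pm$ being symmetric in the two permutations. The step I expect to be the genuine obstacle is the final inequality of the third paragraph: since the supporting rows and columns of the partial assignment sum on $B_l$ are not controlled — and, contrary to what one might hope, $m\mapsto d_{\max}^lF^+_m-d_{\min}^lF^-_m$ is not monotone in $m$ — one cannot simply invoke $\#B_l\leq(l+1)m_1$ together with monotonicity; the passage to $F^\pm_{(l+1)m_1}$ must be carried out directly, which is precisely what the somewhat intricate definitions~\eqref{eq:pn}--\eqref{eq:Fm} and the sharp counting of Lemma~\ref{lem:EkGk} are tailored for. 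A lesser point requiring care is the transposition count in the second paragraph.
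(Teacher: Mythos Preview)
Your argument is essentially the paper's proof: the same Neumann split $J=J^{\text{approx}}+R$, the same lower bound $J^{\text{approx}}(P_+)-J^{\text{approx}}(P)\ge s_{\lceil m_1/2\rceil}$ via the decomposition~\eqref{eq:sig} and the transposition count (the paper uses Lemma~\ref{lem:prop2} precisely as you suggest, concluding that at least $\lceil m_1/2\rceil$ summands in~\eqref{eq:decom} are nonzero), and the same tail estimate obtained by cancelling over $G_{l+1}$ and invoking Lemma~\ref{lem:EkGk}. The only cosmetic difference is that the paper writes $\langle u,(PD)^lPv\rangle=\sum_n u_n\,d_{\sigma^l(n)}\cdots d_{\sigma(n)}\,v_{\sigma^{l+1}(n)}$ (moving the operator onto the $u$ side), so that $G_{l+1}(\sigma_+,\sigma)$ appears in place of your $G_{l+1}(\sigma^{-1},\sigma_+^{-1})$; this is equivalent.

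Regarding the step you flag as the genuine obstacle: the paper does not invoke monotonicity of $m\mapsto d_{\max}^lF^+_m-d_{\min}^lF^-_m$ either. Instead it inserts the intermediate inequality
\[
\Bigl|\sum_{n\notin G_{l+1}} u_n\bigl(d_{\sigma_+,l,n}v_{\sigma_+^{l+1}(n)}-d_{\sigma,l,n}v_{\sigma^{l+1}(n)}\bigr)\Bigr|
\ \le\ d_{\max}^l\!\!\sum_{n\notin G_{l+1}}\!\! u_n v_{\sigma_+(n)}\ -\ d_{\min}^l\!\!\sum_{n\notin G_{l+1}}\!\! u_n v_{\sigma_-(n)},
\]
with $\sigma_\pm$ the sorting permutations of $v$, and only then passes to $F^\pm_{(l+1)m_1}$ using $\#\{n\notin G_{l+1}\}\le(l+1)m_1$ together with the fact that $n\mapsto u_nv_{\sigma_+(n)}$ is nondecreasing. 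Your instinct that this is the delicate passage is correct: the replacement of $v_{\sigma_+^{l+1}(n)}$ and $v_{\sigma^{l+1}(n)}$ by $v_{\sigma_+(n)}$ and $v_{\sigma_-(n)}$ over a \emph{partial} index set is asserted rather tersely in the paper and is not a direct consequence of the full rearrangement inequality, so when you write this up you should make that step as explicit as possible.
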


\begin{proof}
We keep the notation in Section~\ref{Sec:techres} and give the proof in the case of the maximization problem.
The case of the minimization problem can be handled in the very same way.
Let $P\in \P$ and $\sigma\in \S$ the associated permutation, we have
\begin{align}
\langle u ,(\I- P_+D)^{-1} P_+ v \rangle - &\langle u ,(\I-P D)^{-1} P v \rangle \label{eq:line1} \\
                                    & = \sum_{l=0}^{+\infty} \langle u ,\big((P_+D)^l P_+-(PD)^l P\big) v \rangle  \nonumber\\
                                    &= \langle u ,(P_+-P)v \rangle + \sum_{l=1}^{+\infty} \langle u ,\big((P_+D)^l P_+-(PD)^l P\big) v \rangle. \label{eq:line3}
\end{align}
From the definition
$E_k(\s_+,\s)$ and $G_k(\s_+,\s)$, we have 
$E_1(\s_+,\s)\sqcup G_1(\s_+,\s)=\{1,\ldots,N\}$.
Let us denote by $(w_n)_{n=1}^N = (v_{\sigma_{+}(n)})_{n=1}^N$ and by $\sigma'_{n}$ the sequence defined by~\eqref{eq:sig} with $\sigma'_0:=\sigma^{-1}_+ \circ \sigma$.
From the definition of $E_1(\s_+,\s)$ and $G_1(\s_+,\s)$, we have $\sigma(G_1(\s_+,\s)) = \sigma_+(G_1(\s_+,\s))$ and $\sigma(E_1(\s_+,\s))=\sigma_+(E_1(\s_+,\s))$, which implies $\sigma_0'(E_1(\s_+,\s))=E_1(\s_+,\s)$, and for any $i\in G_1(\s_+,\s)$, $\sigma_0'(i) = i$. 
Using these properties and~\eqref{eq:lowboundwrite}, we have
\begin{equation}\label{eq:decom}
\begin{split}
\langle u ,(P_+-P)v \rangle = &\sum_{n=1}^{N-1} (u_n-u_{(\sigma'_{n-1})^{-1}(n)})(w_n-w_{\sigma'_{n-1}(n)})\\
                            = &\sum_{n\in E_1(\s_+,\s)} (u_n-u_{(\sigma'_{n-1})^{-1}(n)})(w_n-w_{\sigma'_{n-1}(n)}) \\
                            & + \sum_{n\in G_1(\s_+,\s)} (u_n-u_{(\sigma'_{n-1})^{-1}(n)})(w_n-w_{\sigma'_{n-1}(n)}) \\
                            = &\sum_{n\in E_1(\s_+,\s)} (u_n-u_{(\sigma'_{n-1})^{-1}(n)})(w_n-w_{\sigma'_{n-1}(n)}).
\end{split}
\end{equation}
In the case where there exists a transposition $(i \ i')$  with $i<i'$ in $\sigma'$, Lemma~\ref{lem:prop2} implies that  $u_{(\sigma'_{i'-1})^{-1}(i')}=u_{i'}$ and $w_{\sigma'_{i'-1}(i')}=w_{i'}$.
The maximum number of transpositions in $\sigma'_0$ is $\frac{m_1}2$ if $m_1$ is even, $\frac{m_1-3}2$ otherwise.
Hence, the smallest number of non-zero terms present in the last sum of~\eqref{eq:decom} is given by $m_1-\frac{m_1}2=\frac{m_1}2$ if  $m_1$ is even, $\frac{m_1-1}2$ otherwise.
In other words, there exists at least $\left \lceil{\frac{m_1}2}\right \rceil$ non-zero terms in the last sum of~\eqref{eq:decom}, which implies
\begin{equation}\label{eq:mino}
\langle u ,(P_+-P)v \rangle = \sum_{n\in E_1(\s_+,\s)} (u_n-u_{(\sigma'_{n-1})^{-1}(n)})(w_n-w_{\sigma'_{n-1}(n)}) \geq s_{\left \lceil{\frac{m_1}2}\right \rceil}.
\end{equation}
For $n\in\{1,\ldots, N\}$ and $l\in\N^*$, let us denote by $d_{\sigma,l,n}:=d_{\sigma^l(n)}d_{\sigma^{l-1}(n)}\cdots d_{\sigma(n)}$. 
Considering now the second term of the right-hand side of~\eqref{eq:line3}, we get
\begin{equation*}
<u,(P D)^l P v> = \sum_{n=1}^N u_n d_{\sigma^l(n)}d_{\sigma^{l-1}(n)}\cdots d_{\sigma(n)} v_{\sigma^{l+1}(n)} = \sum_{n=1}^N u_n d_{\sigma,l,n}v_{\sigma^{l+1}(n)}.
\end{equation*}
Using this notation and Lemma~\ref{lem:EkGk}, we find
\begin{equation}\label{eq:majo}
\begin{split}
|\langle u, (P_+ D)^l& P_+v - (P D)^l P v \rangle | \\
        &= \left|\sum_{n=1}^N u_n (d_{\sigma_+,l,n}v_{\sigma_+^{l+1}(n)} - d_{\sigma,l,n} v_{\sigma^{l+1}(n)}) \right|\\
        &= \left|\sum_{n\not\in G_{l+1}(\s_+,\s)} u_n (d_{\sigma_+,l,n}v_{\sigma_+^{l+1}(n)} -  d_{\sigma,l,n} v_{\sigma^{l+1}(n)}) \right|\\
        &= \left|\sum_{n\not\in G_{l+1}(\s_+,\s)} u_n d_{\sigma_+,l,n} v_{\sigma_+^{l+1}(n)} -  \sum_{n\not\in G_{l+1}(\s_+,\s)} u_n d_{\sigma,l,n} v_{\sigma^{l+1}(n)} \right|\\
        &\leq  d_{\max}^l \sum_{n\not\in G_{l+1}(\s_+,\s)} u_n v_{\sigma_+(n)} - d_{\min}^l \sum_{n\not\in G_{l+1}(\s_+,\s)} u_n  v_{\sigma_-(n)} \\
        &\leq  d_{\max}^l F^+_{(l+1)m_1} - d_{\min}^l F^-_{(l+1)m_1}.
\end{split}
\end{equation}
This result combined with~\eqref{eq:thmcon}, gives
\begin{equation*}
\begin{split}
\left|\sum_{l=1}^{+\infty} \langle u, (P_+ D)^l P_+v - (P D)^l P v  \rangle\right| \leq &\sum_{l=1}^{+\infty} d_{\max}^l F^+_{(l+1)m_1} - d_{\min}^l F^-_{(l+1)m_1} \\
\leq &s_{\left \lceil{\frac{m_1}2}\right \rceil}.
\end{split}
\end{equation*}
Considering now~\eqref{eq:mino}, we obtain
\begin{equation*}
\left|\langle u ,(P_+-P)v \rangle\right| \geq \left|\sum_{l=1}^{+\infty} \langle u, (P_+ D)^l P_+v - (P D)^l P v  \rangle\right|.
\end{equation*}
It follows that the first term of~\eqref{eq:line3} dominates the second one.
As a consequence, the former has the same sign as~\eqref{eq:line1}. 
The result follows.
\end{proof}

\subsection{Implementation remarks}\label{Subsec:implementation_remarks}

In this section, we give details on the practical computation of the infinite sum in~\eqref{eq:phi}. 
Given $m_1\in\{2,\ldots,N\}$, define by $l^*$ such that
\begin{equation*}
l^* := \left\lfloor\frac N{m_1}\right\rfloor -1.
\end{equation*}
We have
\begin{equation*}
\begin{split}
&\sum_{l=1}^{+\infty} \big(d_{\max}^l F^+_{(l+1)m_1} - d_{\min}^l F^-_{(l+1)m_1}\big)\\
&= \sum_{l=1}^{l^*} \big(d_{\max}^l F^+_{(l+1)m_1} - d_{\min}^l F^-_{(l+1)m_1}\big) + \sum_{l=l^*+1}^{+\infty} \big(d_{\max}^l F^+_{(l+1)m_1} - d_{\min}^l F^-_{(l+1)m_1}\big)\\
&= \sum_{l=1}^{l^*} \big(d_{\max}^l F^+_{(l+1)m_1} - d_{\min}^l F^-_{(l+1)m_1}\big) + \frac{d_{\max}^{l^*+1}}{1-d_{\max}} F_{N}^+ - \frac{d_{\min}^{l^*+1}}{1-d_{\min}} F_{N}^-.
\end{split}
\end{equation*}
It follows that the infinite sum involved in $\phi(m_1)$ actually reduces to a finite sum that can be computed numerically without any approximation.
As for the evaluation of $s_{\left \lceil{\frac{m_1}2}\right \rceil}$, only $\left \lceil{\frac{N}2}\right \rceil$ terms need to be computed.
Examples of behaviour of $s_m$ and $F^+_m,F^-_m$ are presented in Figure~\ref{fig:Fm}, whereas examples of behaviour of the function~\eqref{eq:phi} with respect to $m_1$ are shown in Figure~\ref{fig:criterion}.

\section{Application to algal production}\label{sec:numexp}

Algal raceway reactors are currently the most extended technology for microalgae growth, more than 90\% of the worldwide microalgae production is performed by this technology.
In this system, the algae are exposed to solar radiation and advected in a laminar regime. 
This regime holds as long as they remain far enough from the mixing device, that usually consists of a paddle wheel. 
Meanwhile, they evolve at a constant depth and one can consider that their vertical positions only change after passing through this device~\cite{Demory2018}.
Two main phenomena have to be taken into account to study algal production.
First, the photosynthetic activity of the algae close to surface may suffer from photoinhibition by which an excess of light decreases the speed of photosynthesis.
Second, the algae at the bottom of the raceway may not receive any light since this quantity exponentially decreases with respect to depth. 
In this framework, it has been shown that a well-chosen mixing device can increase significantly the algal growth by balancing the access to the light resource~\cite{Bernard2021a,Bernard2021c}.
In this section, we analyse this result in the framework of the resource allocation process and apply the theory developed in the previous section to an algal production case. 
Finally, we provide some numerical results to evaluate the efficiency of the mixing strategies and their approximation.

\subsection{Biological dynamics and raceway mixing modeling}\label{sec:example}

The growth of algae results from the activity of photosynthetic cells generated by solar radiation. 
This complex interaction is accurately described by the Han model~\cite{Han2002}, which takes into account the above-mentioned phenomenon of photoinhibition. 
In this model, each light harvesting unit is assumed to have three different states: open and ready to harvest a photon ($A$), closed while processing the absorbed photon energy ($B$), or inhibited if several photons have been absorbed simultaneously leading to an excess of energy ($C$).
Their dynamics is described by the following system
\begin{equation*}
\left\{
\begin{array}{lr}
\dot{A} = -\sigma_H I A + \dfrac 1{\tau_H}B,\\
\dot{B} =  \sigma_H I A - \dfrac 1{\tau_H}B + k_r C - k_d\sigma_H I B,\\
\dot{C} = -k_r C + k_d \sigma_H I B.
\end{array}
\right.
\end{equation*}
Here $A, B$ and $C$ are the relative frequencies of the three possible states with $A+B+C=1$, and $I$ is a continuous time-varying signal representing the photon flux density. 
The coefficient $\sigma_H$ stands for the specific photon absorption, $\tau_H$ is the turnover rate, $k_r$ represents the photosystem repair rate and $k_d$ is the damage rate.

Since the sum of these three states is equal to one, the latter system can be reduced to two equations by eliminating $B$ as following:
\begin{equation*}
\begin{split}
\begin{pmatrix}
\dot A\\
\dot C
\end{pmatrix}
=& \epsilon \cdot (M_H
\begin{pmatrix}
A\\
C
\end{pmatrix}
+ b_H), \quad
\epsilon=\begin{pmatrix}
1 & 0\\
0 & k_d
\end{pmatrix}, \\
&M_H=\begin{pmatrix}
-(\sigma_H I + \frac1{\tau_H}) & -\frac1{\tau_H}\\
-\sigma_H I & -(\frac{k_r}{k_d} +\sigma_H I)
\end{pmatrix}, \quad 
b_H=\begin{pmatrix}
\frac1{\tau_H} \\
\sigma_H I
\end{pmatrix}.
\end{split}
\end{equation*}
The dynamics of the open state $A$ reaches its steady state following a process whose speed is much higher than the dynamics of the photoinhibition state $C$. 
This phenomenon is mainly due to the presence of the multiplicative parameter $k_d$ which is on the order of $10^{-4}$ whereas the absolute value of the entries of $M_H$ and $b_H$ are on the order of $0.1-6$ (see Table~\ref{Tab1}, where an example of typical values for the Han parameters is given).
We can then apply a slow-fast approximation using singular perturbation theory~\cite{Lamare2019}. 
This approach consists in replacing $A$ by its pseudo steady state $A_{\text{steady}}:=\frac{1-C}{\tau_H\sigma_H I+1}$ into the reduced evolution equation of $C$. 
The Han dynamics can then be reduced to a single evolution equation on $C$:
\begin{equation}\label{eq:dotC}
\dot{C} = -\alpha(I) C + \beta(I),
\end{equation}
where
\begin{equation*}
\alpha(I) := k_d\tau_H \frac{(\sigma_H I)^2}{\tau_H \sigma_H I+1} + k_r, \quad \beta(I) := k_d\tau_H \frac{(\sigma_H I)^2}{\tau_H \sigma_H I+1}.
\end{equation*}

The specific growth rate is proportional to $\sigma_H I A$, replacing $A$ by its pseudo steady state $A_{\text{steady}}$, the net specific growth rate is defined by
\begin{equation}\label{eq:mu}
\mu(C,I) := -\gamma(I)C + \zeta(I),
\end{equation}
where
\begin{equation*}
\gamma(I)  := \frac{k_H\sigma_H I}{\tau_H \sigma_H I+1}, \quad \zeta(I) := \frac{k_H\sigma_H I}{\tau_H \sigma_H I+1}-R.
\end{equation*}
Here, $R$ denotes the respiration rate and $k_H$ is a factor which relates the photosynthetic activity to the growth rate.

We assume that the system is perfectly mixed so that the biomass concentration is homogeneous.
Meanwhile, we also assume that the photosynthetic units grow slowly such that the variations of biomass concentration and background turbidity are negligible over one lap of the raceway. 
At this timescale, the turbidity and biomass concentration can be supposed to be constant. 
In this framework,  the light intensity reads as a function of the depth $z$ and can be modeled by the Beer-Lambert, i.e.,  
\begin{equation}\label{eq:Beer}
I(z) = I_s\exp(\varepsilon z),
\end{equation}
where $I_s$ is the light intensity at the free surface and $\varepsilon$ is the light extinction coefficient.
The average net specific growth rate over the domain is then defined by
\begin{equation}\label{eq:mubar}
\bar{\mu} := \frac 1T\int_0^T\frac 1{h}\int_{-h}^{0} \mu\big(C(t, z), I(z)\big) \D z \D t,
\end{equation}
where $h$ is the depth of the raceway pond and $T$ is the average duration of one lap of the raceway pond.

Let us now see how this model can be included in the framework of Section~\ref{sec:model}. 
In order to compute numerically~\eqref{eq:mubar}, we introduce a vertical discretization of the fluid, consisting of $N$ layers uniformly distributed on a vertical grid. 
The depth of the layer $n$ is given by
\begin{equation}\label{eq:zn}
z_n := -\frac {n-\frac12}{N} h, \quad n=1,\ldots, N.
\end{equation}
For a given initial photoinhibition state $C_n(0)$ associated with a photosystem located in layer $n$, let $C_n(t)$ be the solution of~\eqref{eq:dotC} at time $t$.
In this semi-discrete setting, the average net specific growth rate in the raceway pond can be defined by
\begin{equation}\label{eq:muN}
\bar \mu_{N} := \frac 1T \int_0^T \frac 1{N}\sum_{n=1}^{N}\mu( C_n(t),I_n ) \D t,
\end{equation}
where $I_n$ is the light intensity received in the layer $n$. 
The solution of~\eqref{eq:dotC} can be computed explicitly to get a formula that takes the form of~\eqref{eq:sys1}. 
Denoting by $C(t)$ the time dependent vector whose components are given by $C_n (t)$, it follows that $\bar \mu_{N}$ satisfies
\begin{equation}\label{eq:muNexp}
\bar \mu_N=\frac 1{NT} \Big(\langle \Gamma, C(0) \rangle + \langle \mathbf 1,Z\rangle\Big),
\end{equation}
where $\mathbf 1$ is a vector of size $N$ whose coefficients are equal to 1, and $\Gamma, Z$ are two vectors such that
\begin{equation*}
\begin{split}
\Gamma_n&:=\frac{\gamma(I_n)}{\alpha(I_n)}(e^{-\alpha(I_n)T}-1),\\
Z_n&:= \frac{\gamma(I_n)\beta(I_n)}{\alpha(I_n)^2}(1-e^{-\alpha(I_n)T}) - \frac{\gamma(I_n)\beta(I_n)}{\alpha(I_n)}T+ \zeta(I_n) T.
\end{split}
\end{equation*}
The detail of the computations giving rise to~\eqref{eq:muNexp} is presented in Appendix~\ref{app:exacomp}.

Assume now that at each lap, the layers are permuted according to $\sigma\in\S$, meaning that the algae at the layer $n_1$ are supposed to be entirely transferred to the layer $n_2=\sigma(n_1)$ when passing through the mixing device, 
This mixing process is depicted schematically on an example in Figure~\ref{fig:P}.
\usetikzlibrary{decorations.pathreplacing}
\begin{figure}[hptb]
\begin{center}
\begin{tikzpicture}
\node at (0,-0.7)[anchor=north] {$T_k$};
\node at (2,-0.7)[anchor=north] {$T_{k+1}^{-}$};
\node at (3,-0.7)[anchor=north] {$T_{k+1}$};
\node at (5,-0.7)[anchor=north] {$T_{k+2}^{-}$};
\draw [->,thick] (0,-0.5) -- (2,-0.5);
\draw [->,thick] (3,-0.5) -- (5,-0.5);
\draw [dashed] (0,0.5) -- (2,0.5);
\draw [dashed] (3,0.5) -- (5,0.5);
\draw [dashed] (0,1.5) -- (2,1.5);
\draw [dashed] (3,1.5) -- (5,1.5);
\draw [dashed] (0,2.5) -- (2,2.5);
\draw [dashed] (3,2.5) -- (5,2.5);
\draw [dashed] (0,3.5) -- (2,3.5);
\draw [dashed] (3,3.5) -- (5,3.5);
\draw [decorate,decoration={brace,amplitude=4pt},thick](-0.25,-0.45) -- (-0.25,0.45);
\draw [decorate,decoration={brace,amplitude=4pt},thick](-0.25,0.55) -- (-0.25,1.45);
\draw [decorate,decoration={brace,amplitude=4pt},thick](-0.25,1.55) -- (-0.25,2.45);
\draw [decorate,decoration={brace,amplitude=4pt},thick](-0.25,2.55) -- (-0.25,3.45);
\node at (-0.7,0)[anchor=east]{Layer four };
\node at (-0.7,1)[anchor=east]{Layer three };
\node at (-0.7,2)[anchor=east]{Layer two };
\node at (-0.7,3)[anchor=east]{Layer one };
\draw [thick](0,0) -- (2,0);
\draw [thick](3,0) -- (5,0);
\draw [thick](0,1) -- (2,1);
\draw [thick](3,1) -- (5,1);
\draw [thick](0,2) -- (2,2);
\draw [thick](3,2) -- (5,2);
\draw [thick](0,3) -- (2,3);
\draw [thick](3,3) -- (5,3);
\draw [dotted,thick,->](2,0) -- (2.9,3);
\draw [dotted,thick,->](2,1) -- (2.9,0);
\draw [dotted,thick,->](2,2) -- (2.9,1);
\draw [dotted,thick,->](2,3) -- (2.9,2);
\node at (2.5,4.5) (P){$\sigma$};
\draw [->,thick](P)-- (2.5,3.75);
\draw [->,thick](5.5,-0.6) -- (5.5,4.5);
\node at (5.9,4.5) {$z$};
\node at (5.9,3.5) {0};
\node at (5.9,-0.5){$-h$};
\node at (6.5,3) {$z_1=z_{\sigma(4)}$};
\node at (6.5,2) {$z_2=z_{\sigma(1)}$};
\node at (6.5,1) {$z_3=z_{\sigma(2)}$};
\node at (6.5,0) {$z_4=z_{\sigma(3)}$};
\end{tikzpicture}
\end{center}
\caption{Schematic representation of the mixing process over two laps. 
Here, the vertical discretization number $N=4$ and the mixing device corresponds to the cyclic permutation $\sigma= (1 \ 2 \ 3\ 4)$.}
\label{fig:P}
\end{figure}
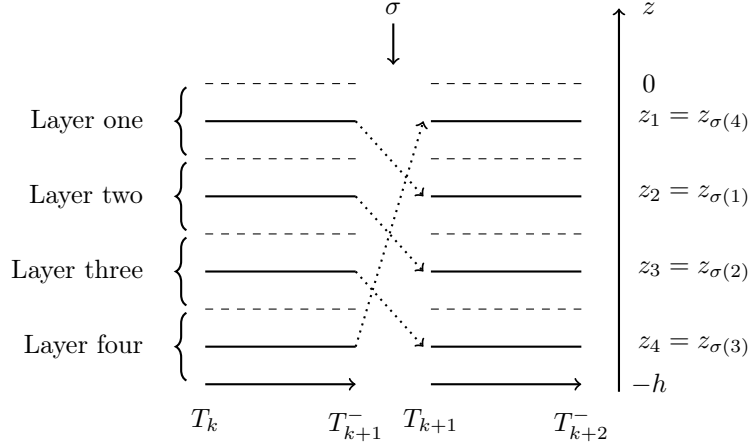
The interest of such a device is to mix the algae to better balance their exposure to light and increase the production.
In this way, though the light resource is considered to be constant, this mixing model and the constraints on the light level received at each layer make the process equivalent to a resource allocation problem.

At this step, we see that up to a change of notation, this model can be interpreted in terms of the resource allocation process described in Section~\ref{sec:model}. 
More precisely, the activity $x_n(t)$ corresponds to the photoinhibition state $C_n(t)$, the resource $r_n$ correspond to the light intensity $I_n$ and the functions $a,b$ correspond to $\alpha,\beta$.

Considering now the periodic regime, we get thanks to Theorem~\ref{th:asympt} and~\eqref{eq:per_cond} that the steady state of this system satisfies $C(0) =  (\I - PD)^{-1}P V$, where $D$ and $V$ are a diagonal matrix and a vector of size $N\times N$ and $N$ respectively, whose components are given by
\begin{equation}\label{def:D}
D_{nn}:= e^{-\alpha(I_n)T},\quad V_n := \frac{\beta(I_n)}{\alpha(I_n)}(1-e^{-\alpha(I_n)T}).
\end{equation}
Our goal is to find the control, i.e. a permutation $\sigma$, which maximizes the average growth rate $\bar \mu_N$. 
Since only $C(0)$ in~\eqref{eq:muNexp} depends on the control $P$, we find that the objective function $\bar \mu_N=\langle \Gamma, (\I - PD)^{-1}P V \rangle$ has the form of $J(P)$ in~\eqref{eq:Jpgen} with $u = \Gamma$, $v = V$ and $D$ defined in~\eqref{def:D}.

\subsection{Parameter settings}\label{subsec:param}

Consider a raceway whose water elevation $h=\SI{0.4}{m}$, which corresponds to typical raceway pond setting.
All the numerical parameters values considered in this section for Han's model are taken from~\cite{Grenier2020} and recalled in Table~\ref{Tab1}.
\begin{table}[htbp]
\caption{Parameter values for Han Model }
\label{Tab1}
\begin{center}
\begin{tabular}{|c|c|c|}
\hline
$k_r$ & 6.8 $10^{-3}$ & $\si{s^{-1}}$\\
\hline
$k_d$ & 2.99 $10^{-4}$  & -\\
\hline
$\tau$ & 0.25 & $\si{s}$\\
\hline
$\sigma_H$ & 0.047 & $\si{m^2.\mu mol^{-1}}$\\
\hline
$k_H$ & 8.7 $10^{-6}$ & -\\
\hline
$R$ & 1.389 $10^{-7}$ & $\si{s^{-1}}$\\
\hline
\end{tabular}
\end{center}
\end{table}
Recall that $I_s$ is the light intensity at the free surface.
In order to fix the value of the light extinction coefficient $\varepsilon$ in~\eqref{eq:Beer}, we assume that only a fraction $q$  of $I_s$ reaches the bottom of the raceway pond, meaning that $I_b = q I_s$, where $q\in[0,1]$ and $I_b$ is the light intensity at the bottom.
It follows that $\varepsilon$ can be computed by
\begin{equation*}
\varepsilon = (1/h)\ln(1/q).
\end{equation*}
In practice, this quantity can be implemented in the experiments by adapting the biomass harvesting frequency, or the dilution rate for continuous cultivation. 
In what follows, the varying parameters are $I_s$, the ratio $q$ and $T$.
We consider $I_s\in[0,2500] \si{ \mu mol.m^{-2}.s^{-1}}$, $q\in[0.1\%,10\%]$ and $T\in[1,1000] \si{s}$.
The number of layers $N$ remains small as we need to test numerically $N!$ permutation matrices for each triplet $(I_s, q, T)$.

\subsection{Numerical tests}\label{subsec:test}

As shown in~\cite[Section IV.B]{Bernard2021a}, Problem~\eqref{eq:Jpgen} admits non-trivial optimal permutation strategies which may significantly change according to the parameter settings. 
In this section, we study and compare the true and the approximated solutions as well as their efficiency with respect to the average net specific~\eqref{eq:mubar}.

We start by investigating some properties of the items defined in the previous sections. 
Recall that the two sequences $u, v$ used in Section~\ref{sec:optgen} correspond in our application to $\Gamma, V$ respectively.
We consider $N=20$ layers and two parameters triplets, namely $(I_s, q, T) = (2000,5\%,1000)$ and (800,0.5\%,1).
Figure~\ref{fig:gammaV} shows the evolution of these two quantities as a function of $I$.
\begin{figure}[htpb]
\centering
\includegraphics[scale=0.28]{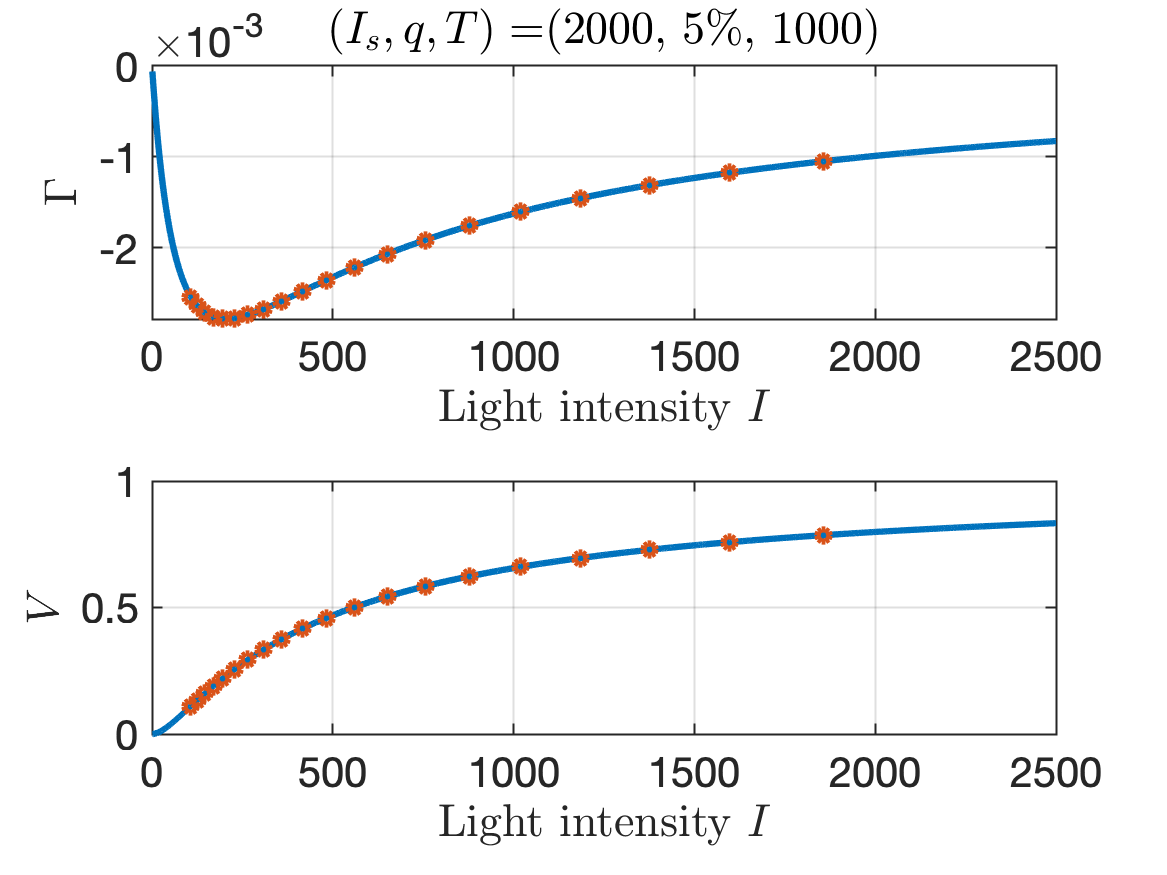}
\includegraphics[scale=0.28]{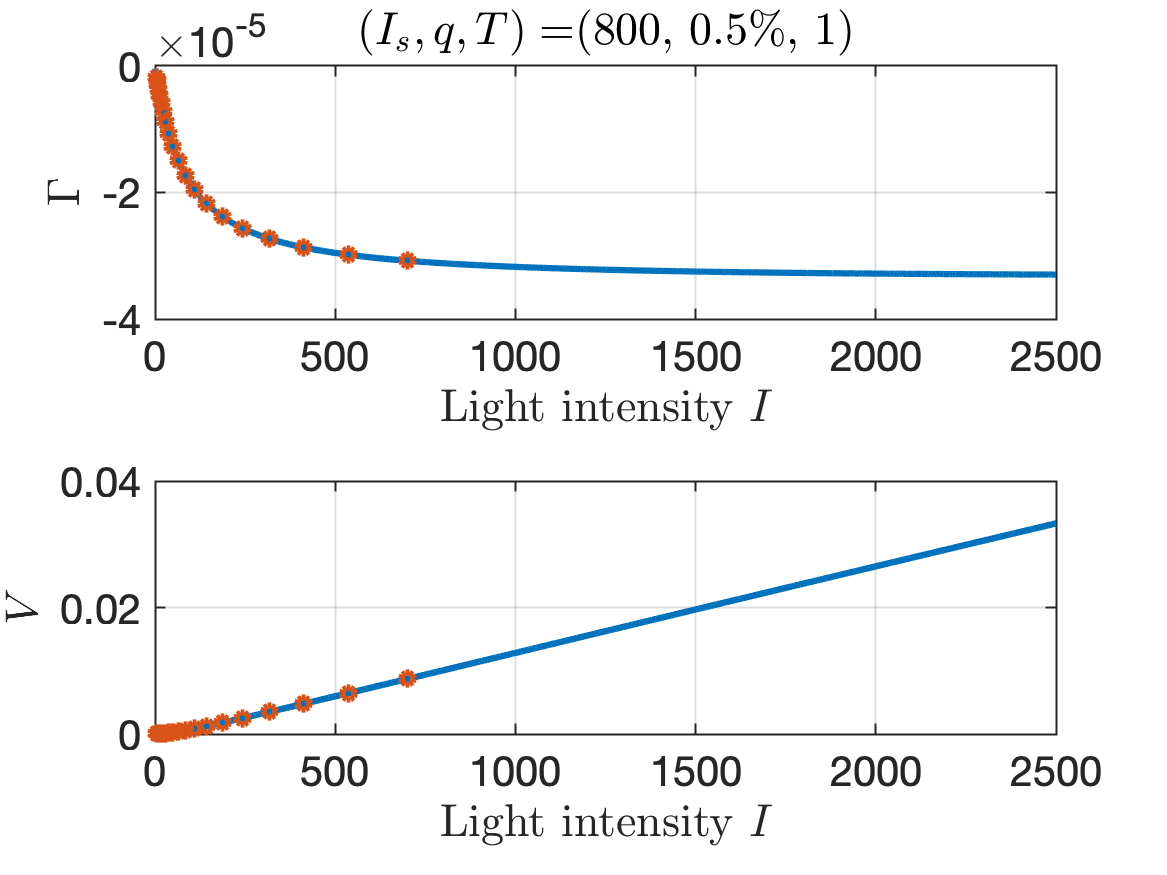}
\caption{$\Gamma$ and $V$ with respect to the light intensity $I$ (Blue curve). 
Discretisation points (Red point) chosen for $(I_s, q, T) = (2000,5\%,1000)$ (Left) and (800,0.5\%,1) (Right).}
\label{fig:gammaV}
\end{figure}
Note that in both cases, $V$ is positive with sorted entries, as it can be seen in~\eqref{eq:V}. 
On the contrary, the discretized $\Gamma$ is negative and not necessarily sorted.
We refer to Appendix~\ref{app:exacomp} for more details about $V$ and $\Gamma$.

We then study the behaviour of the sequences $F^+_m, F^-_m, s_m$ and $\phi(m_1)$ defined in Section~\ref{sec:criterion} for the same two parameters triplets.
Note that since $\Gamma$ is negative, $F^-_m$ and $F^+_m$ are defined as in Appendix~\ref{app:Fm} (and not as in~\eqref{eq:Fm}).
We choose $N=7$ and $N=20$ to check the performance for two different discretisation numbers of layers.
\begin{figure}[htpb]
\centering
\includegraphics[scale=0.3]{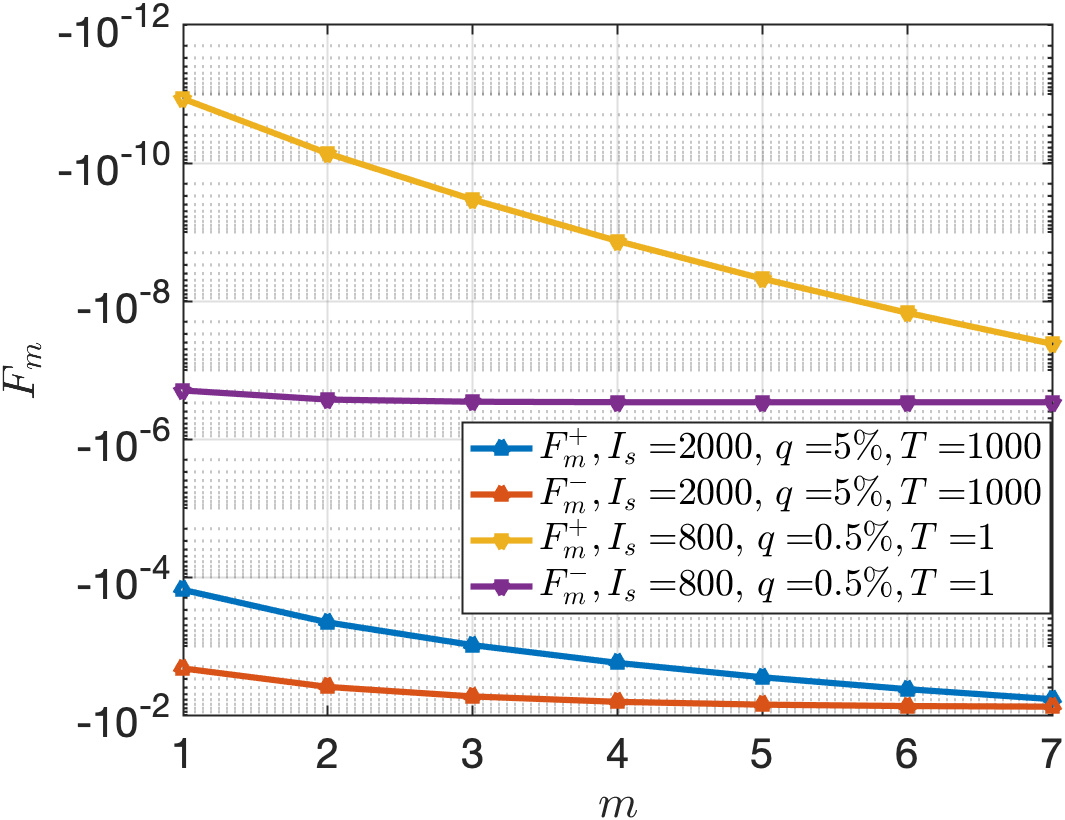}
\includegraphics[scale=0.3]{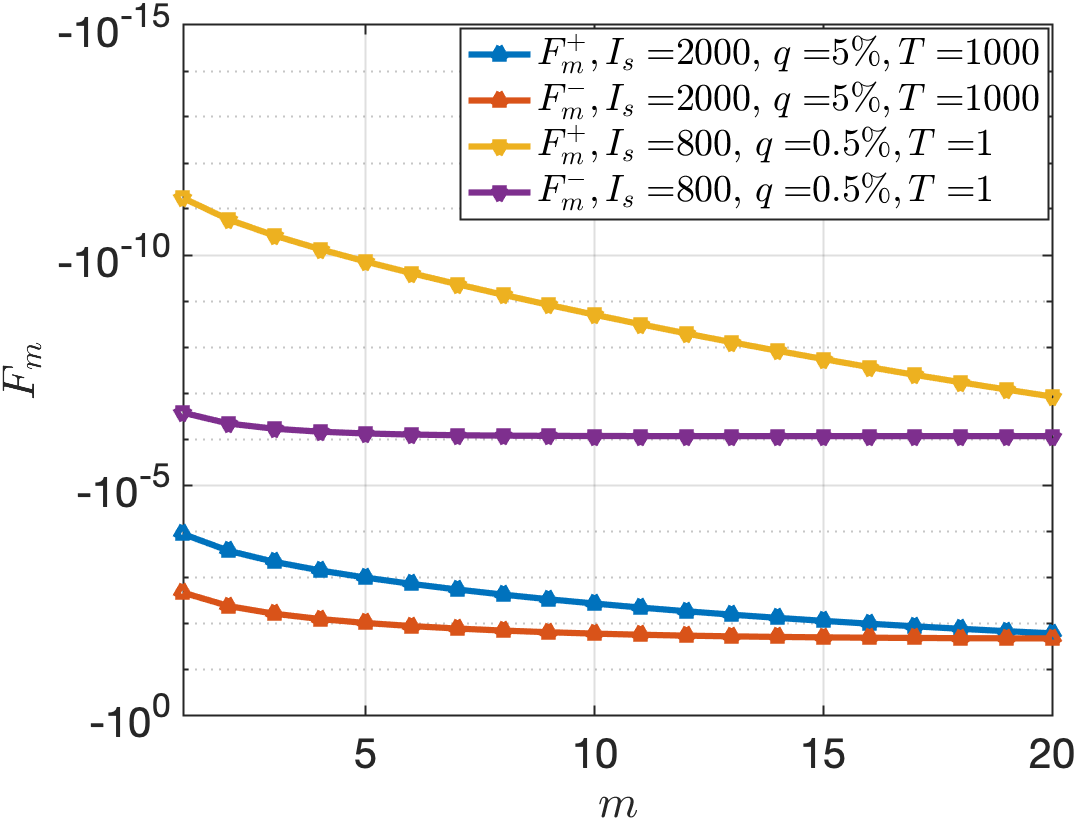}\\
\includegraphics[scale=0.3]{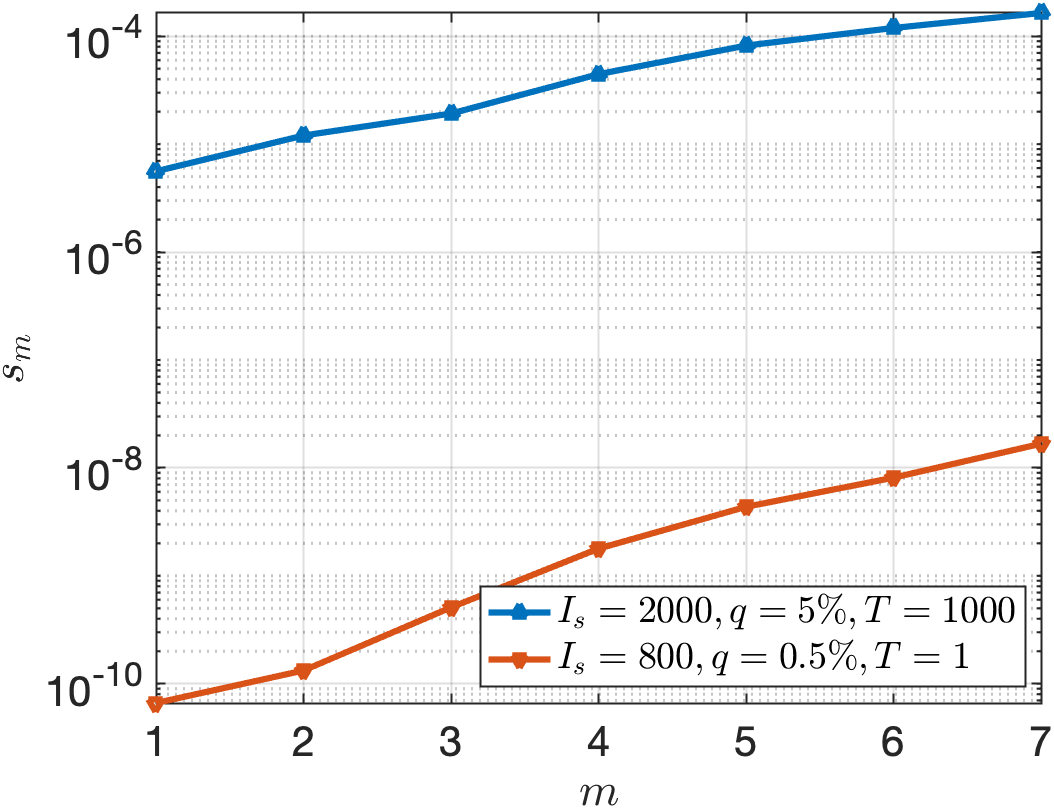}
\includegraphics[scale=0.3]{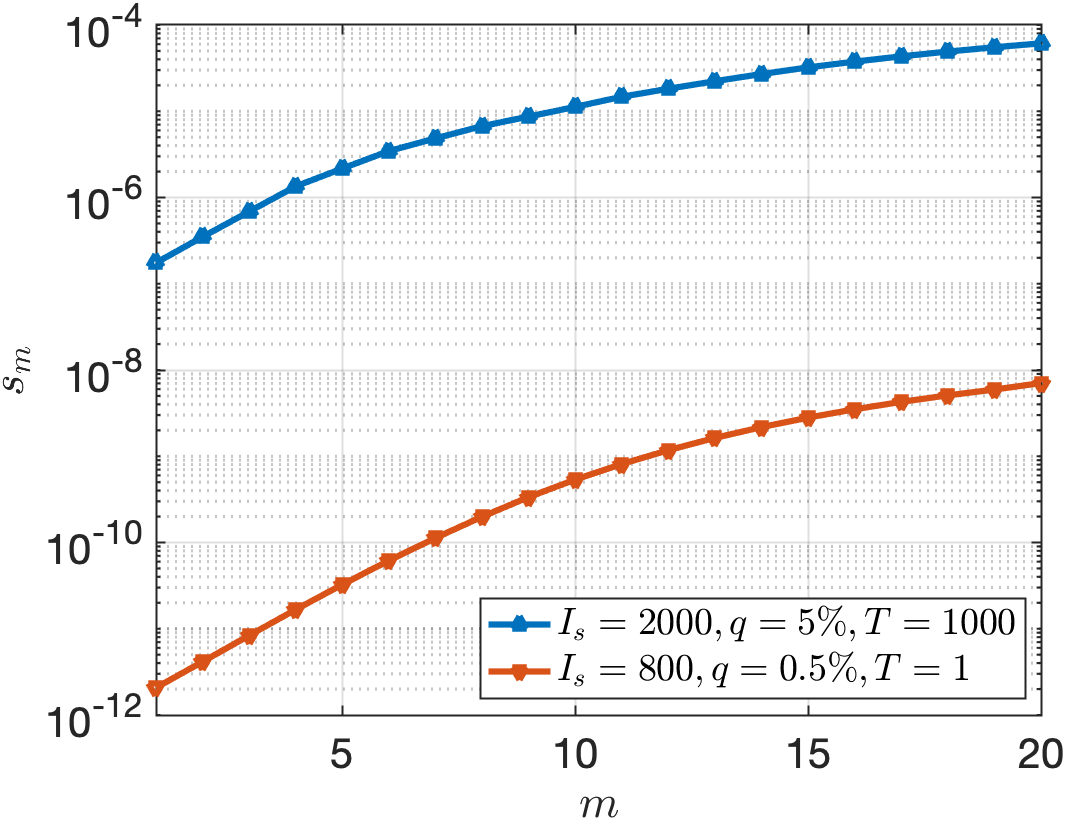}
\caption{Example of sequences $F_m^+$, $F_m^-$ (Top) and $s_m$ (Bottom) with respect to $m$ for the two parameters triplets.
Left: $N=7$. 
Right: $N=20$.
}
\label{fig:Fm}
\end{figure}
One can see in Figure~\ref{fig:criterion} that the maximal value of $\phi(m_1)$ is always obtained for $m_1=2$, and that the maximal value $\phi(m_1)$ appears to be an increasing function of $N$.
This makes the criterion given in Section~\ref{sec:criterion} less efficient for many layers $N$.
Further analysis is required to obtain a criterion that does not depend on $N$.
\begin{figure}[htpb]
\centering
\includegraphics[scale=0.3]{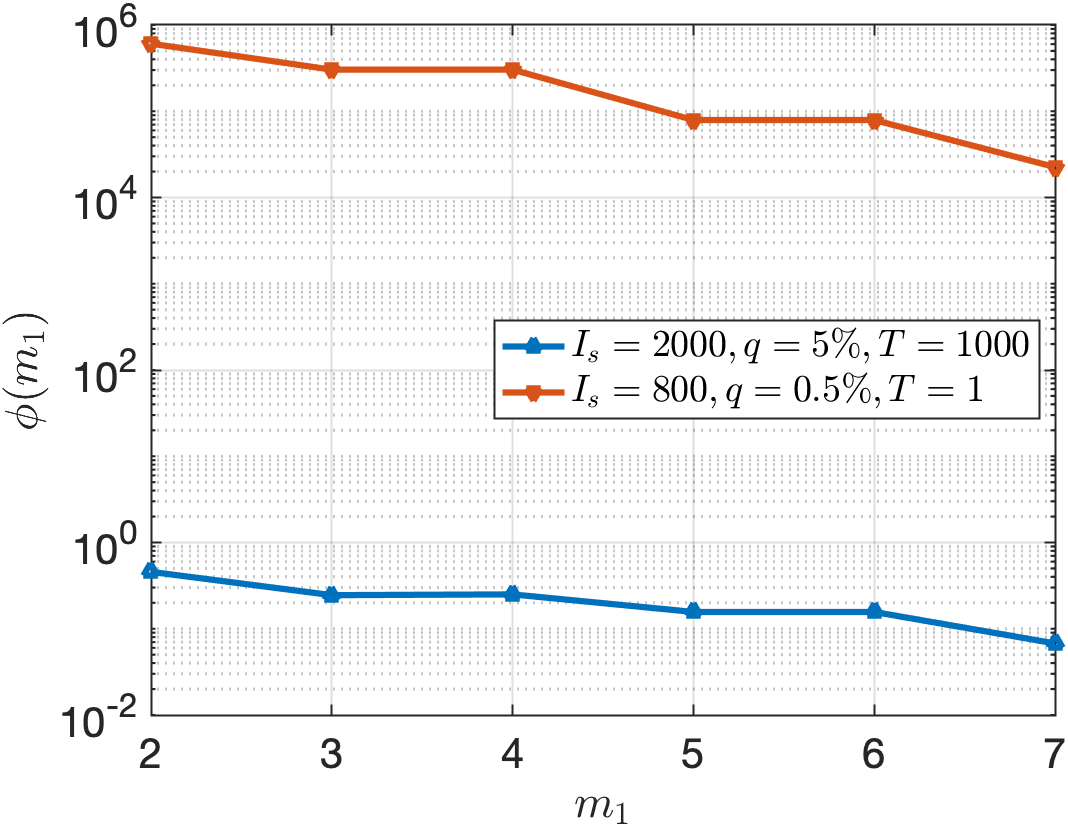}
\includegraphics[scale=0.3]{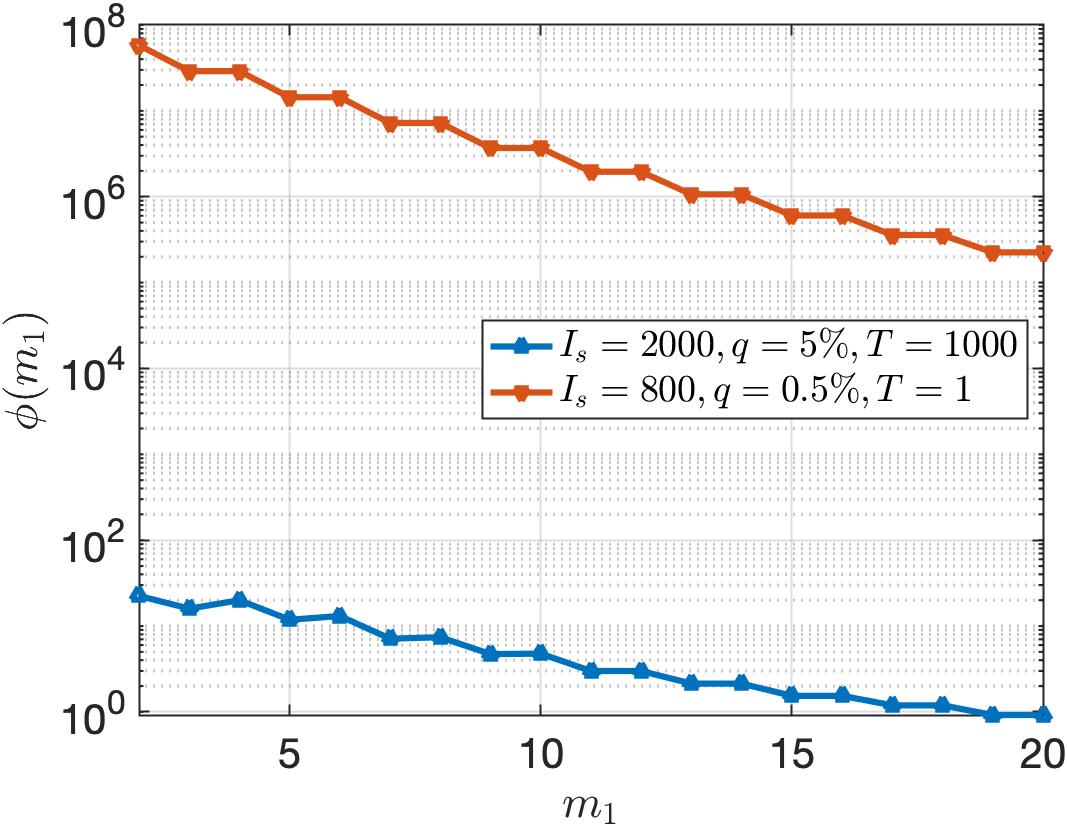}
\caption{Example of behaviour of $\phi(m_1)$ with respect to $m_1$ for two parameters triplets and two different $N$.
Left: $N=7$.
Right: $N=20$.}
\label{fig:criterion}
\end{figure}

The next test is devoted to the convergence of the average growth rate $\bar \mu_N$ with respect to the  number of layers $N$.
We keep the two triplets of parameters of the previous test.
Due to the limit of the computer memory, the computation of $\bar \mu_N(P_{\max})$ is tractable for small values of $N$, in our case lower than or equal to $N=11$.
Such an issue does not occur in the case of $\bar \mu_N(P_+)$.
Figure~\ref{fig:muN} presents the behaviour of $\bar \mu_N$.
\begin{figure}[htpb]
\centering
\includegraphics[scale=0.4]{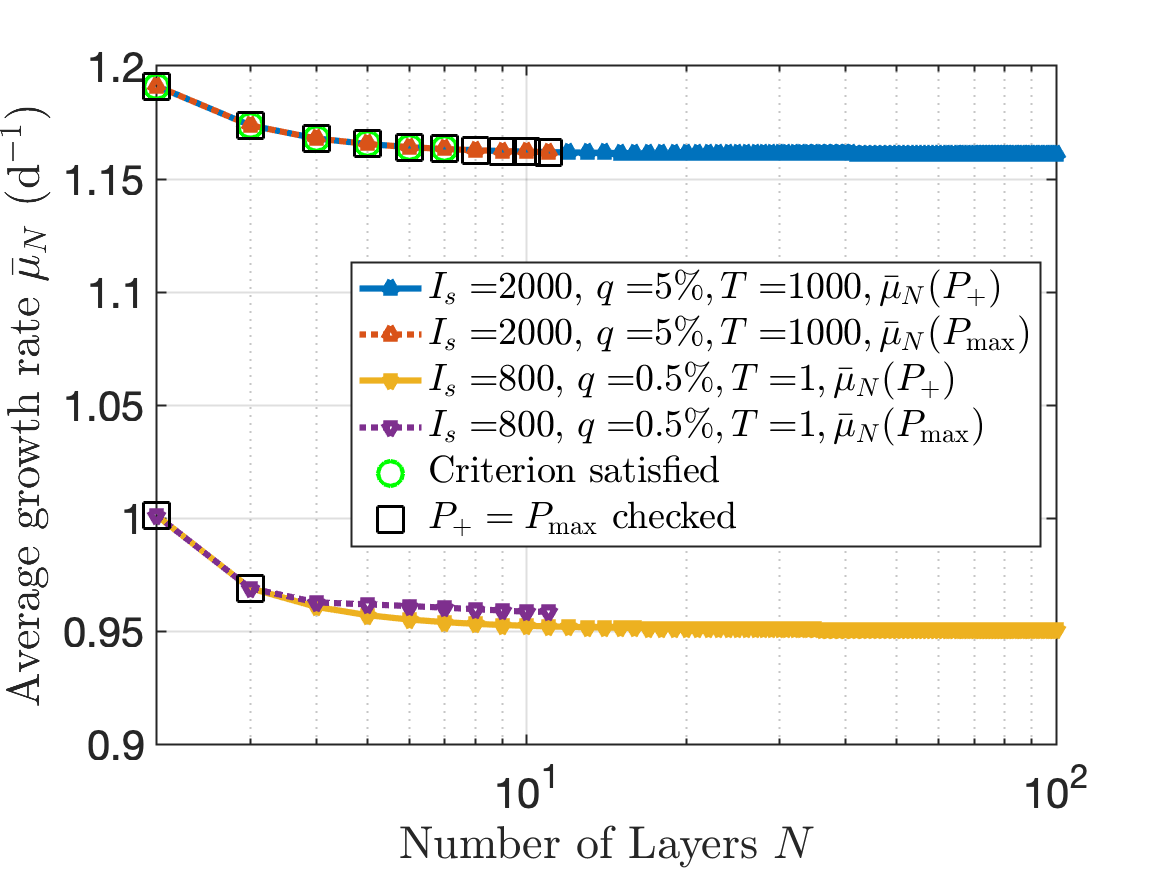}
\caption{Average growth rate $\bar \mu_N$ obtained with $P_{\max}$ and $P_+$ as a function of $N$ for the two parameters triplets.
The green circles mark the case when the criterion is satisfied.
The black squares mark the case when $P_{\max} = P_+$ is observed.
}
\label{fig:muN}
\end{figure}
For the parameter triplet (2000, 5\%,1000), the criterion is satisfied until $N=8$ (green circle), which is confirmed in Figure~\ref{fig:criterion} (Left) for $N=7$ where the maximal value of $\phi(m_1)$ is already close to 1.
Though the criterion is not satisfied for $N>8$, we observe that $P_+ = P_{\max}$ from $N=2$ to $N=11$. 
As for the triplet (800,0.5\%,1), one can see that $P_+ = P_{\max}$ until $N=3$.
Figure~\ref{fig:Popt} shows the optimal control strategies for these two different parameter triplets in the case $N=11$ and $N=100$.
\begin{figure}[htpb]
\centering
\includegraphics[scale=0.35]{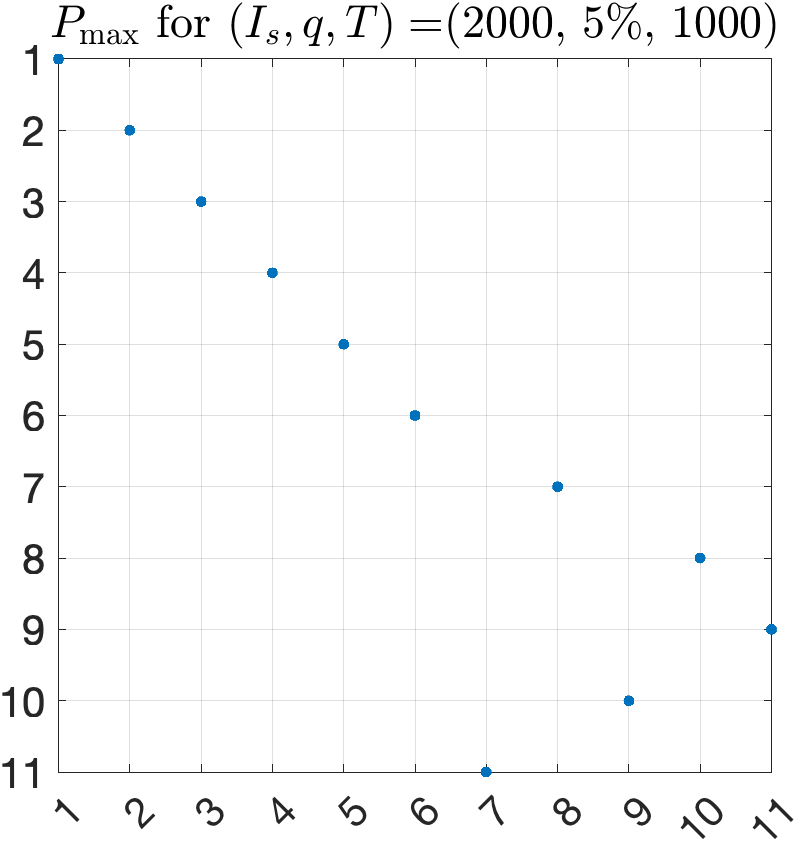}
\includegraphics[scale=0.35]{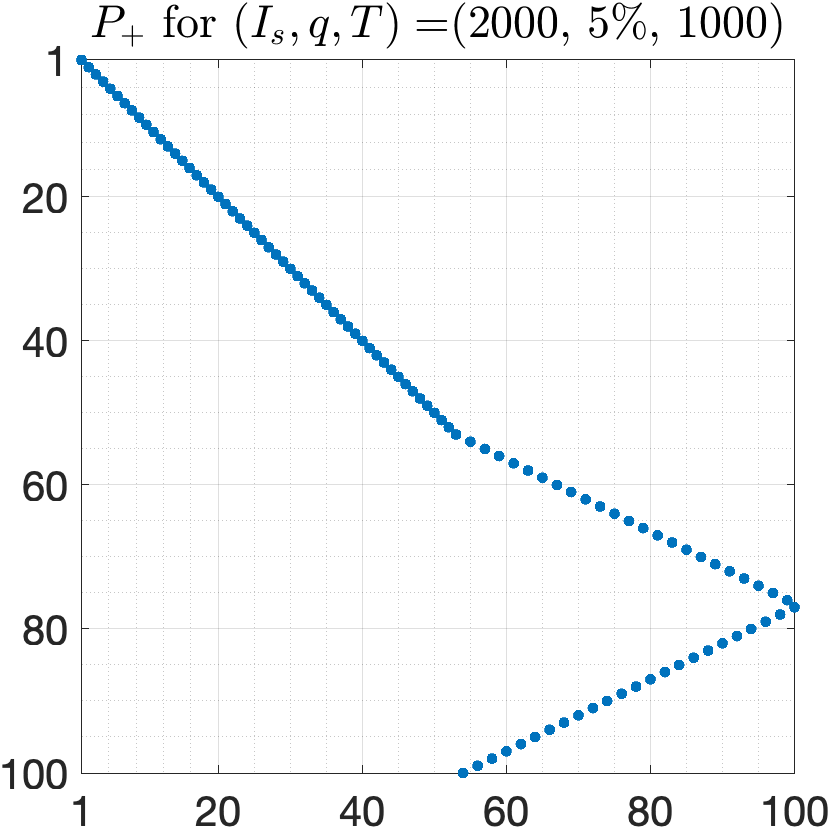}\\
\includegraphics[scale=0.35]{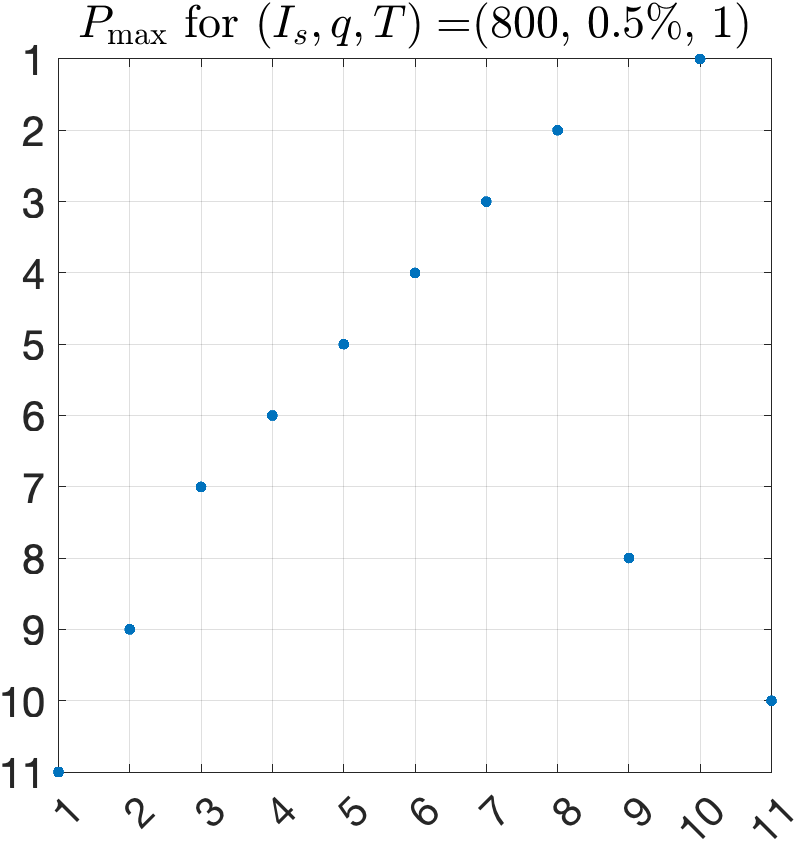}
\includegraphics[scale=0.35]{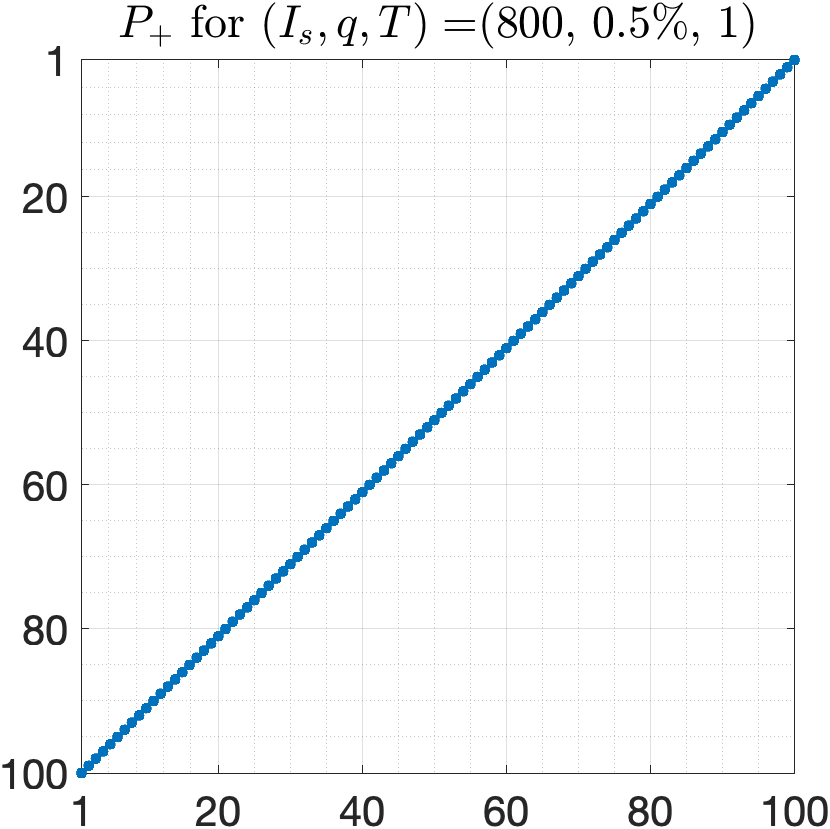}
\caption{Pattern of the optimal matrix $P_{\max}$ for Problem~\eqref{eq:Jpgen} and $N=11$ (Left) and $P_+$ for Problem~\eqref{eq:Jpapproxgen} and $N=100$ (Right) for the two parameters triplets.
The blue points represent non-zero entries, i.e., entries equal to $1$.}
\label{fig:Popt}
\end{figure}
It can be observed that for the parameter triplet (2000, 5\%, 1000), the two controls $P_+,P_{\max}$ have the same form for $N=11$ and $N=100$ (Figure~\ref{fig:Popt} Top).
Hence, one can expect $P_{\max} = P_+$ for larger $N$ which is the case until $N=11$ (as shown in Figure~\ref{fig:muN} black square).
However, this may not be the case for (800,0.5\%,1) since $P_+,P_{\max}$ have already different forms for $N=11$ (Figure~\ref{fig:Popt} Bottom).

In the following tests, we focus only on two special cases: large lap duration time ($T=\SI{1000}{s}$) and small lap duration time ($T=\SI{1}{s}$). 
In practice, the former corresponds to typical time required to complete one lap in a raceway pond system, whereas the latter rather corresponds to photobioreactor~\cite{Lamare2019}. 
In the small lap duration time case, we observe the so-called \textit{flashing effect}.
This phenomenon corresponds to the fact that the growth rate is an increasing function of the light exposition frequency.
It can be observed in Figure~\ref{fig:4muT}, where $\bar \mu_N(P_{\max})$ decreases with respect to $T$ for all considered light intensities.
This phenomenon has already been reported in literature, see, e.g.~\cite{Lamare2019}. 
\begin{figure}[htpb]
\centering
\includegraphics[scale=0.4]{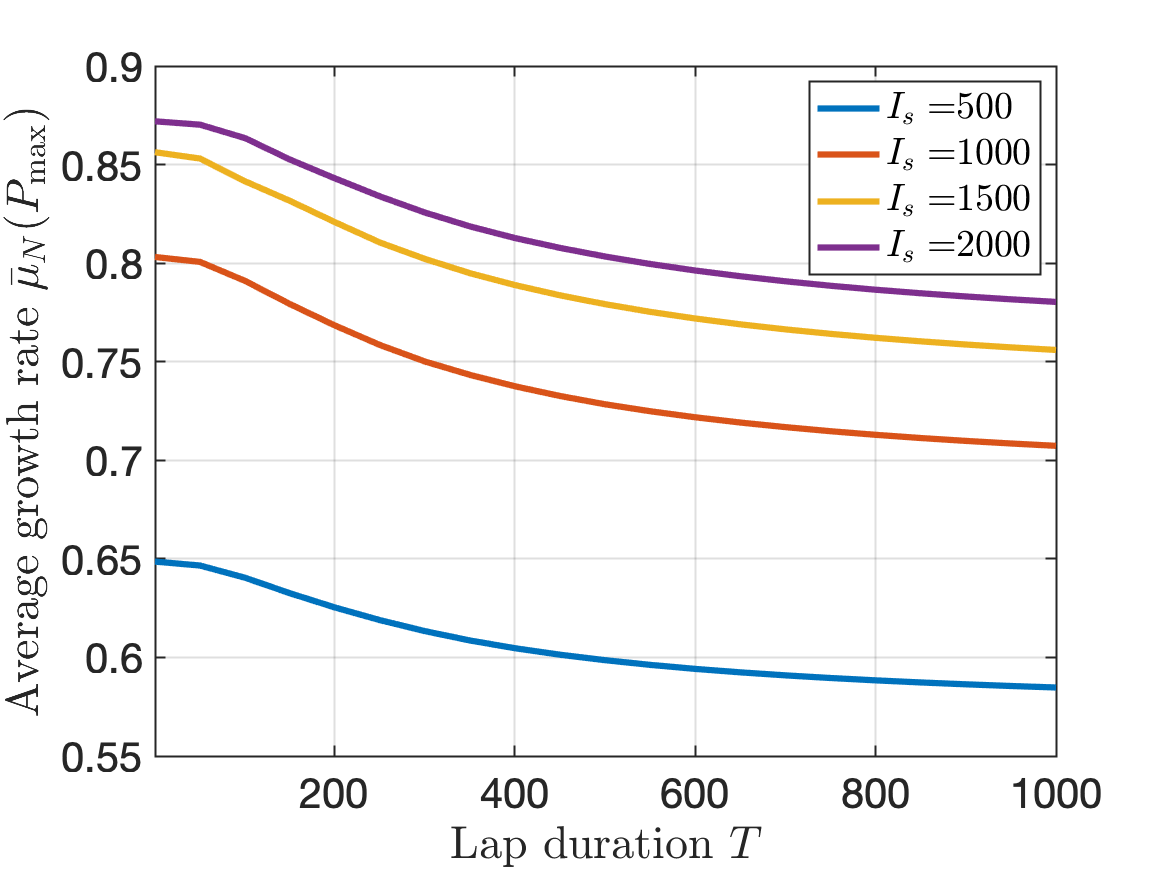}
\caption{Average specific growth rate in the case $q=0.1\%$ and $N=7$ for four different light intensities $I_s$. }
\label{fig:4muT}
\end{figure}

The next test is dedicated to the efficiency of the criterion~\eqref{eq:thmcon}.
More precisely, we evaluate the function $\bar \mu_N$ defined by~\eqref{eq:muN} for the optimal control $P_{\max}$ which solves Problem~\eqref{eq:Jpgen} and for the control $P_+$ which solves the approximated Problem~\eqref{eq:Jpapproxgen}.
We consider two different discretisation values $N = 5$ and $N=9$. 
Figure~\ref{fig:2mark} shows the results for $T=\SI{1}{s}$ and $T=\SI{1000}{s}$.
\begin{figure}[htpb]
\centering
\includegraphics[scale=0.3]{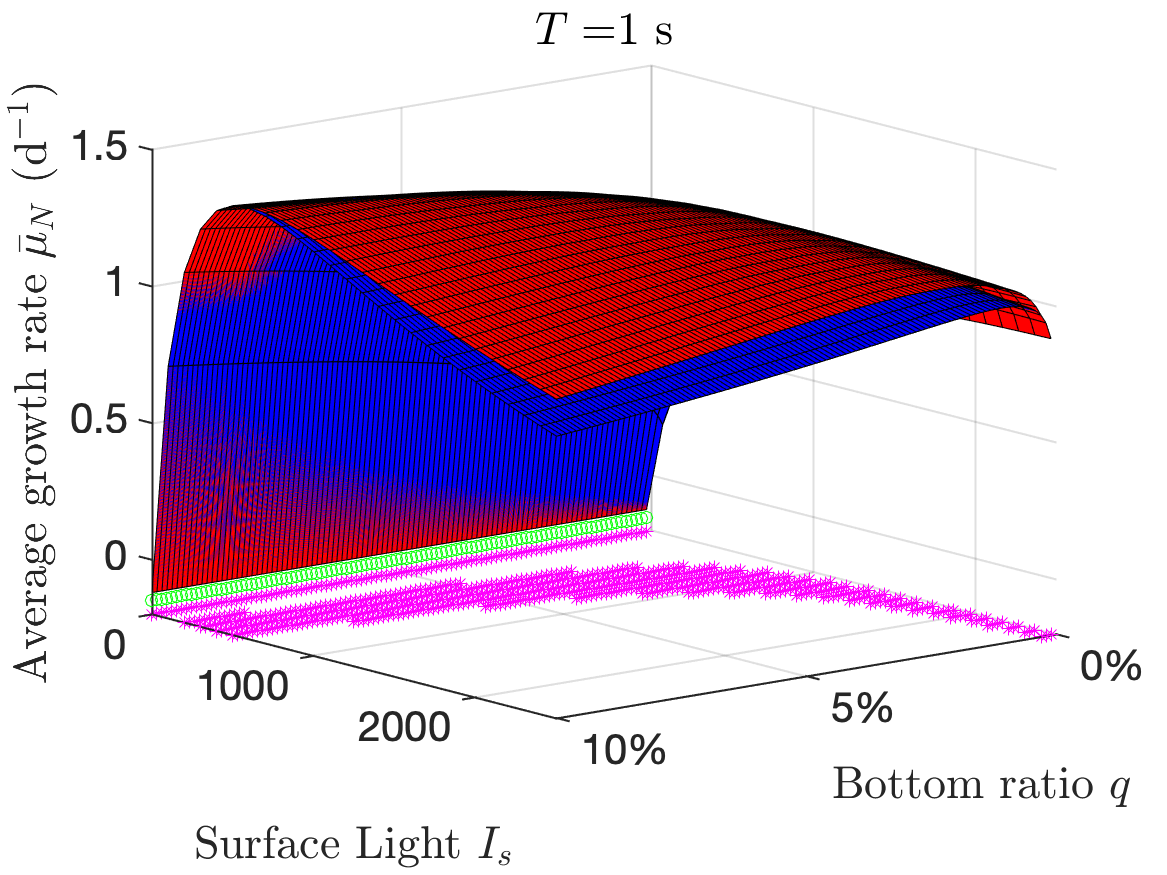}
\includegraphics[scale=0.3]{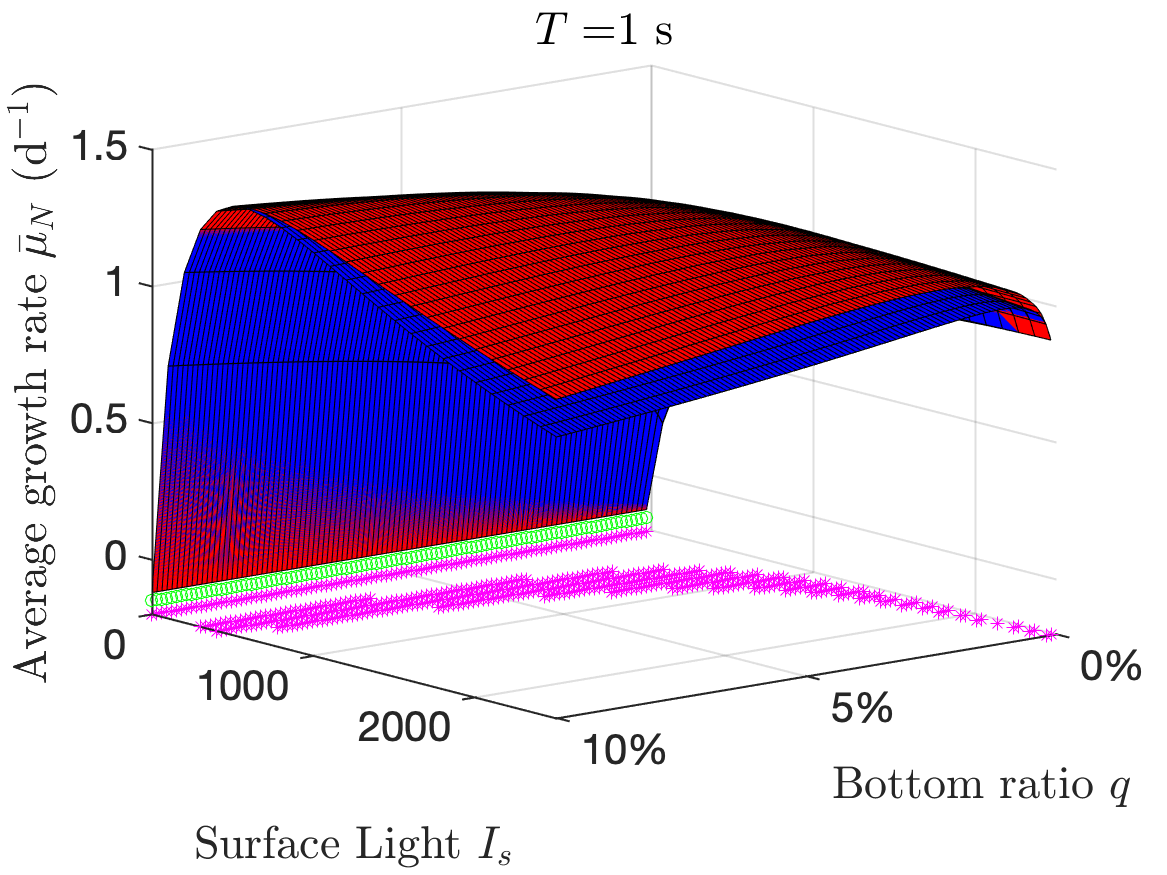}\\
\includegraphics[scale=0.3]{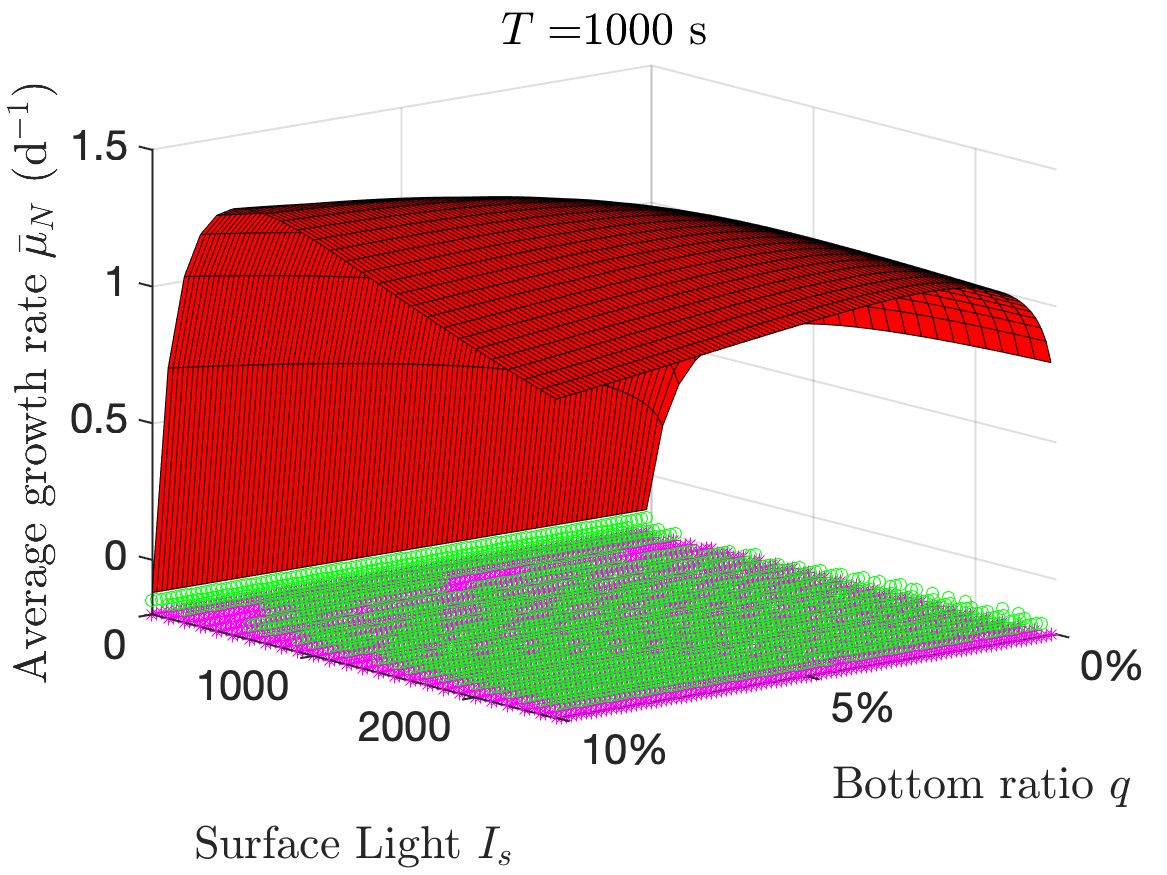}
\includegraphics[scale=0.3]{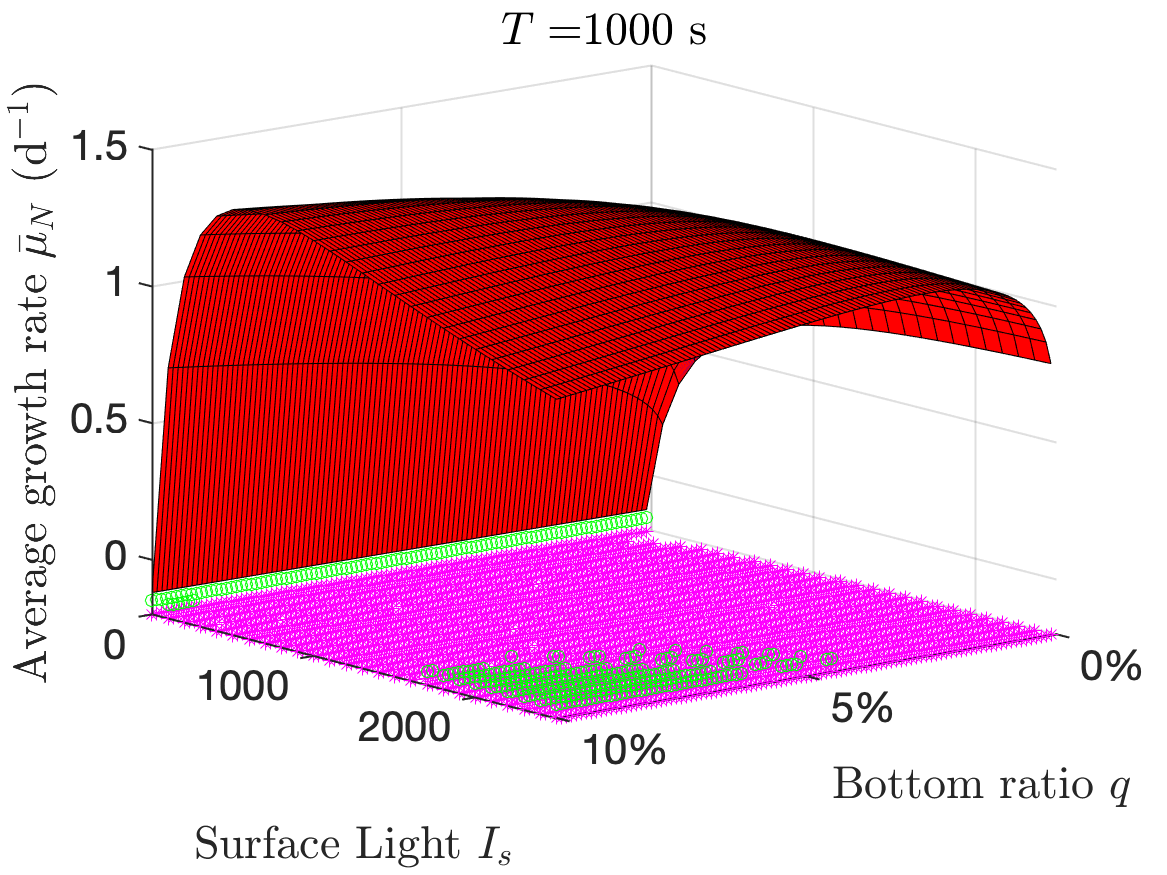}
\caption{Average net specific growth rate $\bar \mu_N$ for $T=\SI{1}{s}$ (Top) and for $T=\SI{1000}{s}$ (Bottom). 
Left: $N=5$.
Right: $N=9$.
The red surface is obtained with the control $P_{\max}$ and the blue surface is obtained with the control $P_+$.
The purple stars represent the cases where $P_{\max}=P_+$ or, in case of multiple solution, $\bar \mu_N(P_{\max})=\bar \mu_N(P_+)$. 
The green circle represent the cases where the criterion~\eqref{eq:thmcon} is satisfied.}
\label{fig:2mark}
\end{figure}
We see that for large values of $T$, the optimum approximation almost always coincides to the true optimum. 
Nevertheless, we observe that the criterion~\eqref{eq:thmcon} becomes less efficient for larger $N$.
Note that the case corresponding to $I_s=\SI{0}{\mu mol.m^{-2}.s^{-1}}$ is particular since no light is available in the system, implying that $\Gamma,V$ equal to zero.
In this case the value of the objective function do not depend on the control $P$.   
Hence $\bar \mu_N(P_{\max})=\bar \mu_N(P_+)$ when $I_s=\SI{0}{\mu mol.m^{-2}.s^{-1}}$.

We finally evaluate the efficiency of various mixing strategies. Define 
\begin{align}
&r_1:=\frac{\bar \mu_N(P_{\max})-\bar \mu_N(\I)}{\bar \mu_N(\I)}, \label{eq:r1}\\
&r_2:=\frac{\bar \mu_N(P_{\max})-\bar \mu_N(P_{\min})}{\bar \mu_N(P_{\min})},\label{eq:r2}\\
&r_3:=\frac{\bar \mu_N(\I)-\bar \mu_N(P_{\min})}{\bar \mu_N(\I)},\label{eq:r3}
\end{align}
where $P_{\min}\in\P$ is the matrix that minimizes $J$, (see~\eqref{eq:Jpgen}), i.e., that corresponds to the worse strategy.
We consider $N=9$ layers.
Figure~\ref{fig:3r} presents the results for $T=\SI{1}{s}$ and $T=\SI{1000}{s}$.
\begin{figure}[htpb]
\centering
\includegraphics[scale=0.29]{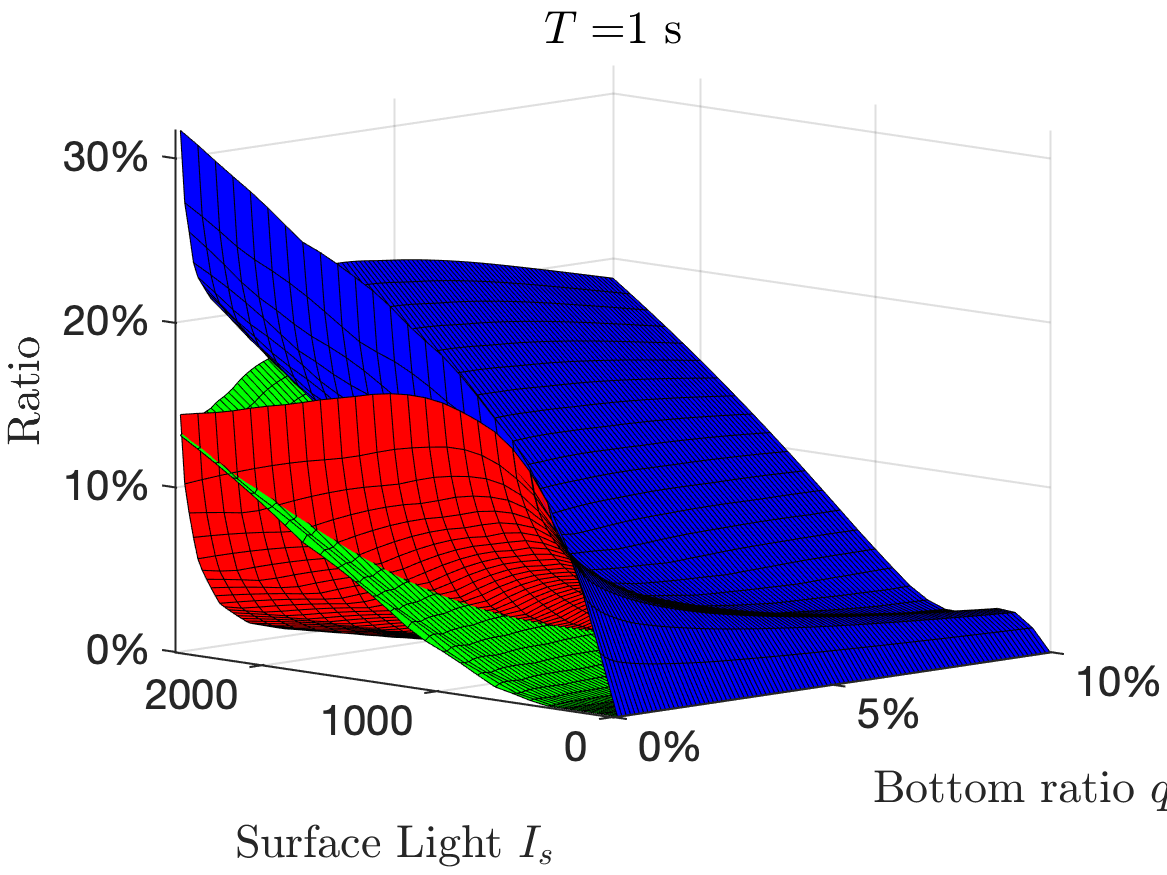}
\includegraphics[scale=0.29]{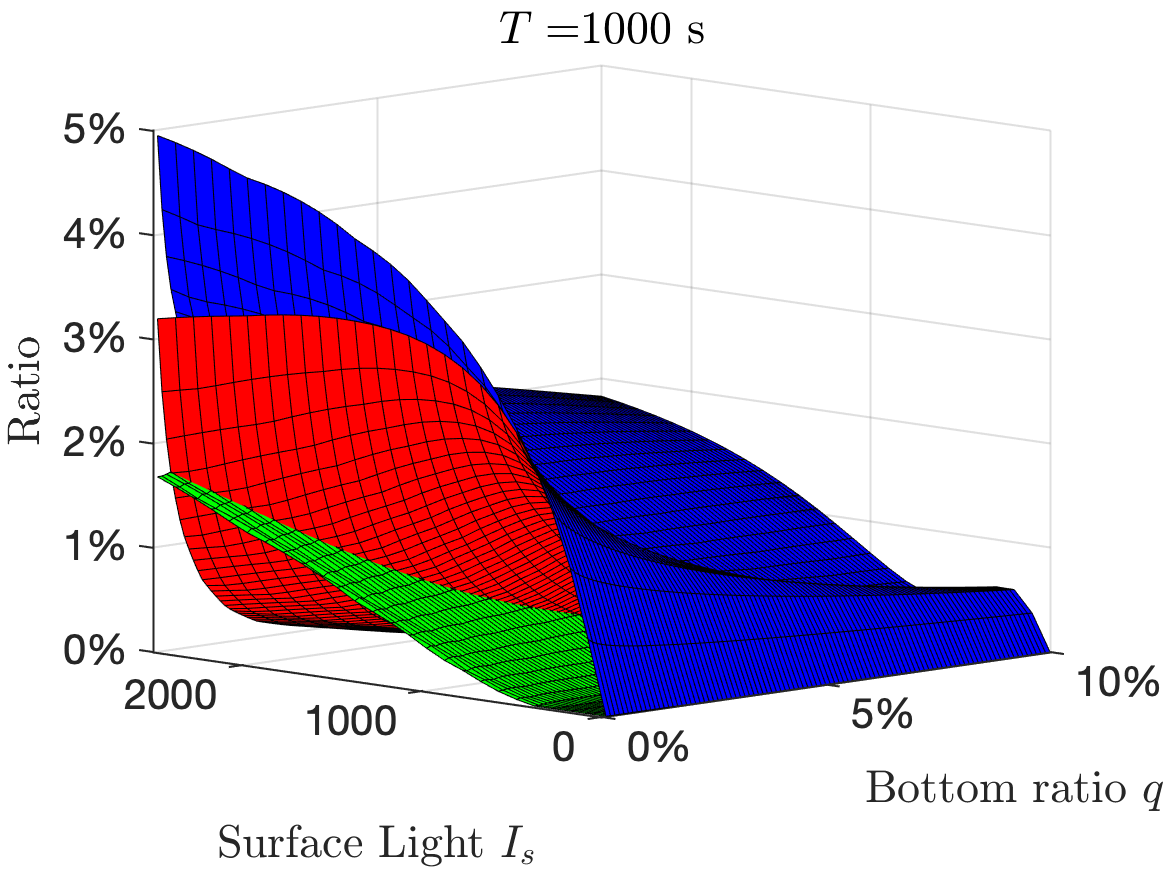}
\caption{Three ratios~\eqref{eq:r1}-~\eqref{eq:r3} for $T=\SI{1}{s}$ (Left) and for $T=\SI{1000}{s}$ (Right).
In each figure, the red surface represents $r_1$, the blue surface represents $r_2$ and the green surface represents $r_3$.}
\label{fig:3r}
\end{figure}
Better performance is in most cases obtained for a small lap duration $T=\SI{1}{s}$.
In this way, we observe that the relative improvement between the best and the no mixing strategy may reach 15\%, whereas the relative improvement between the worst and the best strategy may reach 30\%.
In both two cases, a better improvement can be obtained with high values of $I_s$ and low values of $q$.

To compare the efficiency of the approximation $P_+$ with respect the true optimal mixing strategy $P_{\max}$, we define two extra ratios:
\begin{align}
&\tilde r_1:=\frac{\bar \mu_N(P_+)-\bar \mu_N(\I)}{\bar \mu_N(\I)}, \label{eq:rt1}\\
&\tilde r_2:=\frac{\bar \mu_N(P_+)-\bar \mu_N(P_{\min})}{\bar \mu_N(P_{\min})}.\label{eq:rt2}
\end{align}
Figure~\ref{fig:2rt} presents the results for $T=\SI{1}{s}$ and $T=\SI{1000}{s}$.
\begin{figure}[htpb]
\centering
\includegraphics[scale=0.29]{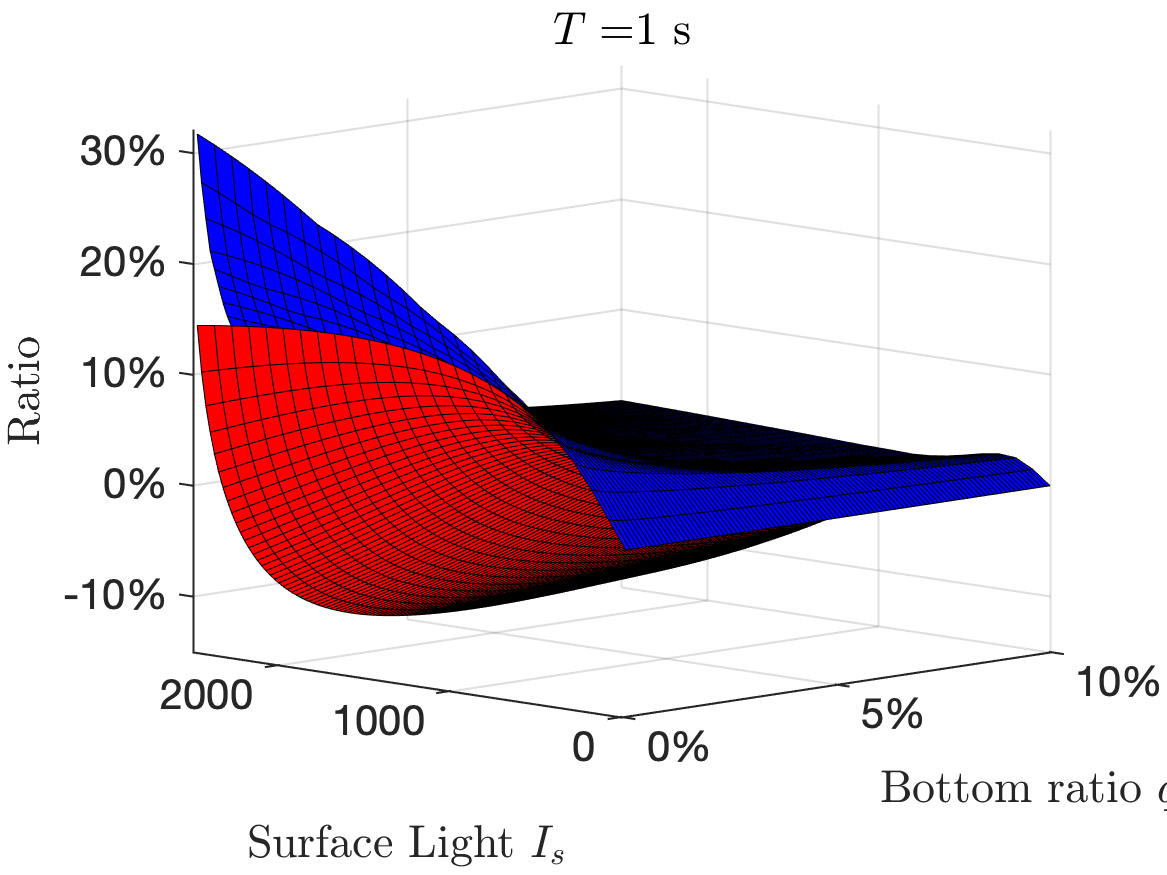}
\includegraphics[scale=0.29]{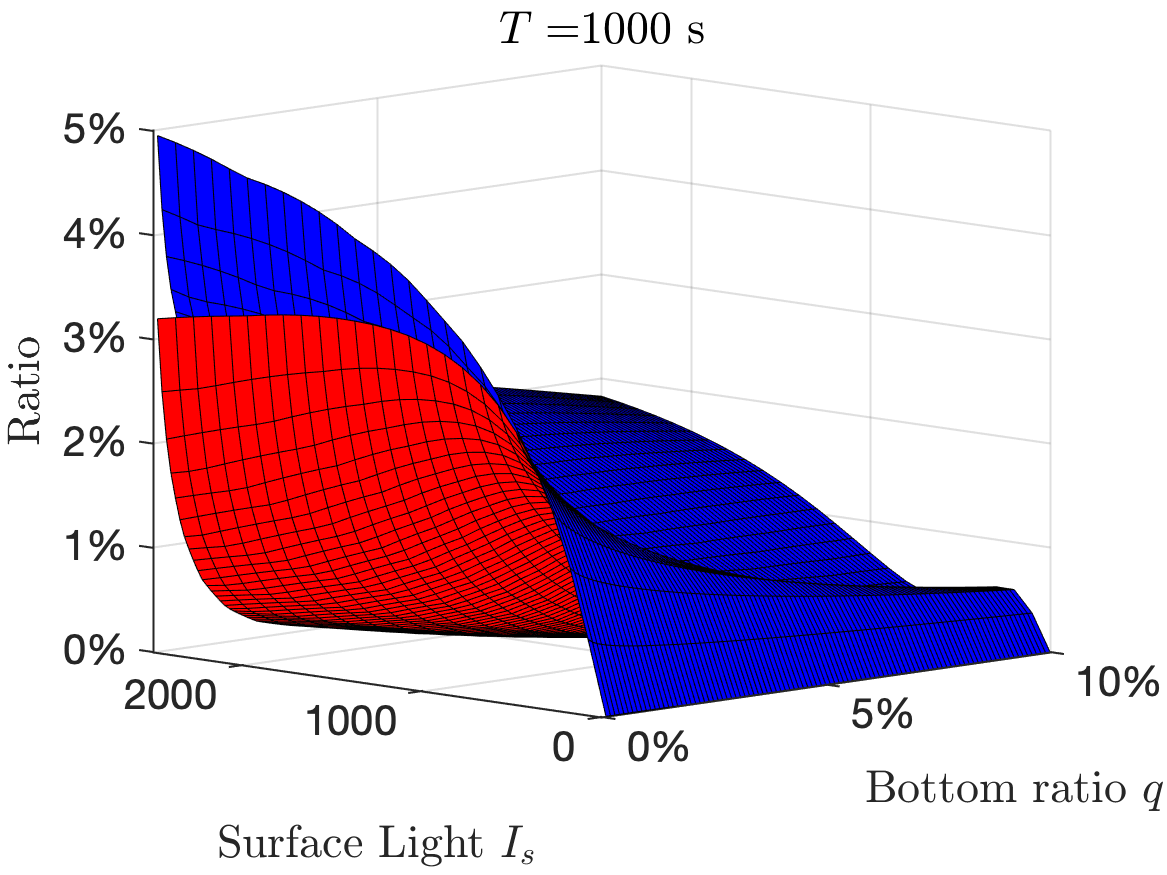}
\caption{Two ratios~\eqref{eq:rt1}-~\eqref{eq:rt2} for $T=\SI{1}{s}$ (Left) and for $T=\SI{1000}{s}$ (Right).
In each figure, the red surface represents $\tilde r_1$, the blue surface represents $\tilde r_2$.}
\label{fig:2rt}
\end{figure}
As already mentioned, for a large lap duration time, the optimization problem~\eqref{eq:Jpapproxgen} provides a good approximation.

This can be observed with the blue and red surface in Figure~\ref{fig:3r} (Right) and in Figure~\ref{fig:2rt} (Right), both surfaces have the same behaviours.
As expected, the approximation becomes less efficient in the case of short lap duration time.
This can be observed in Figure~\ref{fig:3r} (Left) and in Figure~\ref{fig:2rt} (Left). 
However, the maximal values of $r_1,r_2$ are still preserved by their approximations $\tilde r_1, \tilde r_2$.

\section{Conclusion}\label{sec:conclusion}

We have studied a periodic resource allocation problem combined with a  dynamical system.
The periodicity of the problem enables us to reduce the computation to one assignment process.
A significant computational effort is still required when dealing with larger number of $N$. 
We overcome this difficulty by defining a second optimization problem which has an explicit solution that coincide with the true solution when a given criterion is satisfied.

This developed theory is then applied to a microalgal production system with a mixing device.
Non-trivial optimal mixing strategies can be obtained and the proposed second optimization problem provides a reliable approximation for large time duration $T$.
Besides, our experimental results show the significance of the choice of the mixing strategy: the relative ratio between the best and the worst case reaches 30\% in some cases. 
We also observe a flashing effect meaning that better results are obtained when $T$ goes to zero.

Further works will be devoted to the improvement of the function $\phi$ used in Theorem~\ref{thm:main} in order to improve our approach for large number of $N$.
An approximation problem for small lap duration $T$ can also be considered with an appropriate criterion to evaluate this approximation.

\bibliography{perm}

\begin{thebibliography}{10}

\bibitem{Abdelrazec2016}
A.~Abdelrazec, J.~B{\'e}lair, C.~Shan, and H.~Zhu.
\newblock Modeling the spread and control of dengue with limited public health
  resources.
\newblock {\em Mathematical Biosciences}, 271:136--145, 2016.

\bibitem{Akhmetzhanov2011}
A.R. Akhmetzhanov, F.~Grognard, and L.~Mailleret.
\newblock Optimal life-history strategies in seasonal consumer-resource
  dynamics.
\newblock {\em Evolution}, 65(11):3113--3125, 2011.

\bibitem{Amos2004}
R.~Amos.
\newblock Principles for attaining maximal microalgal productivity in
  photobioreactors: an overview.
\newblock {\em Hydrobiologia}, 512(1):33--37, 2004.

\bibitem{Baruah1996}
S.K. Baruah, N.K. Cohen, C~G. Plaxton, and D.A. Varvel.
\newblock Proportionate progress: A notion of fairness in resource allocation.
\newblock {\em Algorithmica}, 15:600--625, 1996.

\bibitem{Bensoussan1982}
A.~Bensoussan and J.-L. Lions.
\newblock {\em Contr\^{o}le impulsionnel et in\'{e}quations quasi
  variationnelles}, volume~11 of {\em M\'{e}thodes Math\'{e}matiques de
  l'Informatique [Mathematical Methods of Information Science]}.
\newblock Gauthier-Villars, Paris, 1982.

\bibitem{Bernard2020}
O.~Bernard, L.-D. Lu, J.~Sainte-Marie, and J.~Salomon.
\newblock Shape optimization of a microalgal raceway to enhance productivity.
\newblock Submitted paper, 11 2020.

\bibitem{Bernard2021b}
O.~Bernard, L.-D. Lu, J.~Sainte-Marie, and J.~Salomon.
\newblock Controlling the bottom topography of a microalgal pond to optimize
  productivity.
\newblock In {\em 2021 American Control Conference (ACC)}, pages 634--639.
  IEEE, 2021.

\bibitem{Bernard2021c}
O.~Bernard, L.-D. Lu, and J.~Salomon.
\newblock Mixing strategies combined with shape design to enhance productivity
  of a raceway pond.
\newblock In {\em 16th IFAC Symposium on Advanced Control of Chemical Processes
  ADCHEM 2021}, volume~54, pages 281--286. IFAC, 2021.

\bibitem{Bernard2021a}
O.~Bernard, L.-D. Lu, and J.~Salomon.
\newblock Optimizing microalgal productivity in raceway ponds through a
  controlled mixing device.
\newblock In {\em 2021 American Control Conference (ACC)}, pages 640--645.
  IEEE, 2021.

\bibitem{Bertsimas1997}
D.~Bertsimas and J.N. Tsitsiklis.
\newblock {\em Introduction to Linear Optimization}.
\newblock Athena Scientific, 1997.

\bibitem{Burkard2008}
R.~Burkard, M.~Dell'Amico, and S.~Martello.
\newblock {\em Assignment Problems}.
\newblock SIAM, 2008.

\bibitem{Chiaramonti2013}
D.~Chiaramonti, M.~Prussi, D.~Casini, M.R. Tredici, L.~Rodolfi, N.~Bassi,
  G.~{Chini Zittelli}, and P.~Bondioli.
\newblock Review of energy balance in raceway ponds for microalgae cultivation:
  Re-thinking a traditional system is possible.
\newblock {\em Applied Energy}, 102:101 -- 111, 2013.
\newblock Special Issue on Advances in sustainable biofuel production and use -
  XIX International Symposium on Alcohol Fuels - ISAF.

\bibitem{Colonius1988}
F.~Colonius.
\newblock {\em Optimal periodic control}, volume 1313 of {\em Lecture Notes in
  Mathematics}.
\newblock Springer-Verlag, Berlin, 1988.

\bibitem{Cominetti2009}
R.~Cominetti and A.~Piazza.
\newblock Asymptotic convergence of optimal policies for resource management
  with application to harvesting of multiple species forest.
\newblock {\em Mathematics of Operations Research}, 34(3):576--593, 2009.

\bibitem{Cornet2010}
J.-F. Cornet.
\newblock Calculation of optimal design and ideal productivities of
  volumetrically lightened photobioreactors using the constructal approach.
\newblock {\em Chemical Engineering Science}, 65(2):985--998, 2010.

\bibitem{Cornet2009}
J.-F. Cornet and C.-G. Dussap.
\newblock A simple and reliable formula for assessment of maximum volumetric
  productivities in photobioreactors.
\newblock {\em Biotechnology Progress}, 25(2):424--435, 2009.

\bibitem{Cuaresma2011}
M.~Cuaresma, M.~Janssen, E.J. {van den End}, C.~V{\'\i}lchez, and R.H.
  Wijffels.
\newblock Luminostat operation: A tool to maximize microalgae photosynthetic
  efficiency in photobioreactors during the daily light cycle?
\newblock {\em Bioresource Technology}, 102(17):7871--7878, 2011.

\bibitem{DelaHozSiegler2012}
H.~{De la Hoz Siegler}, W.~C. McCaffrey, R.~E. Burrell, and A.~Ben-Zvi.
\newblock Optimization of microalgal productivity using an adaptive, non-linear
  model based strategy.
\newblock {\em Bioresource Technology}, 104:537--546, 2012.

\bibitem{Demory2018}
D.~Demory, C.~Combe, P.~Hartmann, A.~Talec, E.~Pruvost, R.~Hamouda,
  F.~Souill{\'e}, P.-O. Lamare, M.-O. Bristeau, J.~Sainte-Marie, S.~Rabouille,
  F.~Mairet, A.~Sciandra, and O.~Bernard.
\newblock How do microalgae perceive light in a high-rate pond? towards more
  realistic lagrangian experiments.
\newblock {\em The Royal Society}, May 2018.

\bibitem{Eppink2017}
M.H.M. Eppink, G.~Olivieri, H.~Reith, C.~van~den Berg, M.J. Barbosa, and R.H.
  Wijffels.
\newblock From current algae products to future biorefinery practices: a
  review.
\newblock In {\em Biorefineries}, pages 99--123. Springer, 2017.

\bibitem{Grenier2020}
J.~Grenier, F.~Lopes, H.~Bonnefond, and O.~Bernard.
\newblock Worldwide perspectives of rotating algal biofilm up-scaling.
\newblock Submitted paper, 2020.

\bibitem{Han2002}
B.-P. Han.
\newblock A mechanistic model of algal photoinhibition induced by photodamage
  to photosystem-ii.
\newblock {\em Journal of theoretical biology}, 214(4):519--527, February 2002.

\bibitem{Hennessy2002}
D.~Hennessy and H.~Lapan.
\newblock The use of archimedean copulas to model portfolio allocations.
\newblock {\em Mathematical Finance}, 12:143--154, 02 2002.

\bibitem{Hespanha2002}
J.P. Hespanha and A.S. Morse.
\newblock {Switching between stabilizing controllers}.
\newblock {\em {Automatica}}, {38}({11}):{1905--1917}, {NOV} {2002}.

\bibitem{Hu2008}
Q.~Hu, M.~Sommerfeld, E.~Jarvis, M.~Ghirardi, M.~Posewitz, M.~Seibert, and
  A.~Darzins.
\newblock Microalgal triacylglycerols as feedstocks for biofuel production:
  perspectives and advances.
\newblock {\em The Plant Journal}, 54(4):621--639, 2008.

\bibitem{Ibaraki1988}
T.~Ibaraki and N.~Katoh.
\newblock {\em Resource allocation problems}.
\newblock Foundations of Computing Series. MIT Press, Cambridge, MA, 1988.
\newblock Algorithmic approaches.

\bibitem{Kantorovich1942}
L.~Kantorovich.
\newblock On the translocation of masses (in russian).
\newblock {\em Doklady Akademii Nauk}, 37(2):227--229, 1942.

\bibitem{Lamare2019}
P.-O. Lamare, N.~Aguillon, J.~Sainte-Marie, J.~Grenier, H.~Bonnefond, and
  O.~Bernard.
\newblock Gradient-based optimization of a rotating algal biofilm process.
\newblock {\em Automatica}, 105:80--88, 2019.

\bibitem{Liberzon1999}
D.~Liberzon and A.S. Morse.
\newblock {Basic problems in stability and design of switched systems}.
\newblock {\em {IEEE CONTROL SYSTEMS MAGAZINE}}, {19}({5}):{59--70}, {OCT}
  {1999}.

\bibitem{Lieb1981}
E.H. Lieb.
\newblock Variational principle for many-fermion systems.
\newblock {\em Physical Review Letters}, 46(7):457--459, 1981.

\bibitem{Liu1973}
C.L. Liu and J.W. Layland.
\newblock Scheduling algorithms for multiprogramming in a hard-real-time
  environment.
\newblock {\em J. ACM}, 20(1):46--61, 1 1973.

\bibitem{Nagahara2021}
K.~Nagahara, Y.~Lou, and E.~Yanagida.
\newblock Maximizing the total population with logistic growth in a patchy
  environment.
\newblock {\em Journal of Mathematical Biology}, 82(1):1--50, 2021.

\bibitem{Piazza2010}
A.~Piazza.
\newblock {About optimal harvesting policies for a multiple species forest
  without discounting}.
\newblock {\em {JOURNAL OF ECONOMICS}}, {100}({3}):{217--233}, {JUL} {2010}.

\bibitem{Scott2010}
S.~A. Scott, M.~P. Davey, J.~S. Dennis, I.~Horst, C.~J. Howe, D.~J. Lea-Smith,
  and A.~G. Smith.
\newblock Biodiesel from algae: challenges and prospects.
\newblock {\em Current Opinion in Biotechnology}, 21(3):277--286, 2010.
\newblock Energy biotechnology -- Environmental biotechnology.

\bibitem{Spolaore2006}
P.~Spolaore, C.~Joannis-Cassan, E.~Duran, and A.~Isambert.
\newblock Commercial applications of microalgae.
\newblock {\em Journal of Bioscience and Bioengineering}, 101(2):87--96, 2006.

\bibitem{Wijffels2010}
R.~H. Wijffels and M.~J. Barbosa.
\newblock An outlook on microalgal biofuels.
\newblock {\em Science}, 329(5993):796--799, 2010.

\bibitem{Zhang201102}
J.~Zhang and X.~Xia.
\newblock A model predictive control approach to the periodic implementation of
  the solutions of the optimal dynamic resource allocation problem.
\newblock {\em Automatica}, 47(2):358--362, 2011.

\bibitem{Zhang201108}
W.Y. Zhang, S.~Zhang, M.~Cai, and J.X. Huang.
\newblock A new manufacturing resource allocation method for supply chain
  optimization using extended genetic algorithm.
\newblock {\em The International Journal of Advanced Manufacturing Technology},
  53(9):1247--1260, 2011.

\end{thebibliography}
\bibliographystyle{plain}

\appendix

\section{Explicit Computations}\label{app:exacomp}

In this appendix, we provide the computational details to solve~\eqref{eq:dotC} and~\eqref{eq:muN} for an arbitrary number $n\in\{1,\ldots, N\}$.
Given two points $t_1, t_2\in[0,T]$. 
Since $I_n$ is constant, Equation~\eqref{eq:evol} can be integrated and becomes
\begin{equation}\label{eq:cnt}
C_n(t_2) = e^{\alpha(I_n)(t_1-t_2)}C_n(t_1) + \frac{\beta(I_n)}{\alpha(I_n)}(1 - e^{\alpha(I_n)(t_1-t_2)}).
\end{equation}
The time integral in~\eqref{eq:muN} can be computed by
\begin{equation*}
\begin{split}
\int_0^T \mu(C_n(t),I_n) \D t= &\int_0^T -\gamma(I_n)C_n(t) + \zeta(I_n) \D t\\
                            &= -\gamma(I_n) \int_0^TC_n(t) \D t + \zeta(I_n) T.
\end{split}
\end{equation*}
Replacing $x_n$ by $C_n$, $t_2$ by $t$ and $t_1$ by 0 in~\eqref{eq:cnt} and integrating $t$ from $0$ to $T$ gives
\begin{equation*}
\begin{split}
\int_0^TC_n(t) \D t =&\int_0^T \Big(e^{-\alpha(I_n)t}C_n(0) + \frac{\beta(I_n)}{\alpha(I_n)}(1 - e^{-\alpha(I_n)t}) \Big)\D t\\
=&\frac{C_n(0)}{\alpha(I_n)}(1-e^{-\alpha(I_n)T}) +  \frac{\beta(I_n)}{\alpha(I_n)}T - \frac{\beta(I_n)}{\alpha^2(I_n)}(1-e^{-\alpha(I_n)T}).
\end{split}
\end{equation*}
Using notations given in Section~\ref{sec:example}, we have 
\begin{equation*}
\Gamma = \frac{\gamma(I)}{\alpha(I)}(e^{-\alpha(I)T}-1), \quad V = \frac{\beta(I)}{\alpha(I)}(1 - e^{-\alpha(I)T}).
\end{equation*}
From the definition of $\alpha(I),\beta(I),\gamma(I)$, we find
\begin{equation*}
\begin{split}
\frac{\beta(I)}{\alpha(I)} &= \frac{\beta(I)}{\beta(I)+k_r} = \frac{k_d\tau(\sigma_H I)^2}{k_d\tau(\sigma_H I)^2 + k_r\tau\sigma_H I + k_r},\\
\frac{\gamma(I)}{\alpha(I)} &= \frac{k_H\sigma_H I}{k_d\tau(\sigma_H I)^2 + k_r\tau\sigma_H I + k_r}.
\end{split}
\end{equation*}
Remark that $\Gamma$ and $V$ always have the opposite sign.
Note also that $I\mapsto\frac{\beta(I)}{\alpha(I)}$ is increasing on $[0,+\infty)$, which is not the case for $I\mapsto\frac{\gamma(I)}{\alpha(I)}$.
It follows that $V$ increases on $\R^+$ and $\Gamma$ is not monotonic on $\R^+$ (see Figure~\ref{fig:gammaV}).

\section{Optimization problem with arbitrary vectors}\label{app:usort}

Let $\tilde u, v\in\R^N$ two arbitrary vectors.
Let $Q\in\P$ such that $u := Q \tilde u$ has entries sorted in ascending order.
Since $Q$ is a permutation matrix, we have $Q^T = Q^{-1}$.
For any $P\in\P$, let us denote by $\tilde P := Q^{-1}P Q$, we have $\tilde P\in\P$ a permutation matrix.
Let us denote by $\tilde v := Q^{-1} v$ and by $\tilde D = Q^{-1}D Q$. 
Note that $\tilde D$ is still a diagonal matrix with a different order of the diagonal coefficients. 
Using this notation, we find for the objective function~\eqref{eq:Jpgen} satisfies
\begin{equation*}
\begin{split}
J(P):=\langle u, (\I - PD)^{-1}P v\rangle &= \langle \tilde u, Q^{-1} (\I - PD)^{-1}Q Q^{-1}P Q Q^{-1}v\rangle \\
&=\langle \tilde u, \big(Q^{-1}(\I - PD)Q\big)^{-1} \tilde P\tilde v\rangle\\
&=\langle \tilde u, (Q^{-1}Q - Q^{-1}P Q Q^{-1}D Q)^{-1} \tilde P\tilde v\rangle\\
&=\langle \tilde u, (\I - \tilde P  \tilde D)^{-1} \tilde P\tilde v\rangle.
\end{split}
\end{equation*}
For  the objective function~\eqref{eq:Jpapproxgen}, we get
\begin{equation*}
J^{\text{approx}}(P):=\langle u, P v\rangle = \langle \tilde u, Q^{-1}P Q Q^{-1} v\rangle =  \langle \tilde u, \tilde P \tilde v\rangle.
\end{equation*}
Therefore, these problems can still be treated similarly in the general case.

\section{Remark on \texorpdfstring{$F_m^+,F_m^-$}{Lg}}\label{app:Fm}

Let $u, v\in\R^N$ such that the entries of $u$ are sorted in ascending order. 
One should be careful when defining the two sequences $F^+_m$ and $F^-_m$ in Section~\ref{sec:criterion}, since the sign of $u$ and $v$ plays an important role in the definition of these two sequences. 
For instance, assume that $u$ is now negative and $v$ is positive.
Let $\tilde u:=-u$, since $u$ is assumed to be sorted in ascending order, $\tilde u$ is positive and sorted in descending order.
Using the definition in~\eqref{eq:Fm}, one has
\begin{equation*}
\tilde F^+_m := \sum_{n=1}^{\min(m,N)} \tilde u_n v_{\tilde \sigma_+(n)}, \quad \tilde F^-_m := \sum_{n = \max(1,N-m+1)}^N \tilde u_n v_{\tilde \sigma_-(2N-m-n+1)},
\end{equation*}
where $v_{\tilde \sigma_+(1)} \geq v_{\tilde \sigma_+(2)} \geq ,\ldots, \geq v_{\tilde \sigma_+(N)}$ and $v_{\tilde \sigma_-(1)} \leq v_{\tilde \sigma_-(2)} \leq ,\ldots, \leq v_{\tilde \sigma_-(N)}$.
Let us define by $\sigma_+ := \tilde \sigma_-$ and $\sigma_- := \tilde \sigma_+$.
One has 
\begin{equation*}
\tilde F^+_m = -\sum_{n=1}^{\min(m,N)} u_n v_{\sigma_-(n)}, \quad \tilde F^-_m = -\sum_{n = \max(1,N-m+1)}^N u_n v_{\sigma_+(2N-m-n+1)}.
\end{equation*}
Therefore, in this case we can define $F^+_m$ and $F^-_m$ by
\begin{equation*}
F^-_m := \sum_{n=1}^{\min(m,N)} u_n v_{\sigma_-(n)}, \quad F^+_m := \sum_{n = \max(1,N-m+1)}^N u_n v_{\sigma_+(2N-m-n+1)}.
\end{equation*}
The case where $u$ is positive and $v$ is negative, or both $u,v$ are negative can be treated similarly. 

\end{document}